\documentclass[hidelinks,onefignum,onetabnum]{siamart220329}

\usepackage{lipsum}
\usepackage{amsfonts}
\usepackage{graphicx}
\usepackage{epstopdf}
\ifpdf
  \DeclareGraphicsExtensions{.eps,.pdf,.png,.jpg}
\else
  \DeclareGraphicsExtensions{.eps}
  \fi

\graphicspath{{figs/}}

\usepackage{commath}
\usepackage{mathrsfs}
\usepackage{mathtools}
\usepackage{amssymb}
\usepackage{caption}

\usepackage{algorithm}
\usepackage{algpseudocode}

\newcommand{\R}{\mathbb{R}}

\newcommand{\normtwo}[1]{\norm{#1}_2}


\makeatletter
\newcommand{\fnorm}{\@ifstar\@fnorms\@fnorm}
\newcommand{\@fnorms}[1]{%
  \left|\mkern-1.5mu\left|\mkern-1.5mu\left|
   #1
  \right|\mkern-1.5mu\right|\mkern-1.5mu\right|
}
\newcommand{\@fnorm}[2][]{%
  \mathopen{#1|\mkern-1.5mu#1|\mkern-1.5mu#1|}
  #2
  \mathclose{#1|\mkern-1.5mu#1|\mkern-1.5mu#1|}
}

\newcommand{\fnormtwo}{\@ifstar\@fnormstwo\@fnormtwo}
\newcommand{\@fnormtwo}[2][]{\@fnorm[#1]{#2}_2}
\newcommand{\@fnormstwo}[1]{\@fnorms{#1}_2}
\makeatother

\newcommand{\hide}[1]{}


\newsiamremark{remark}{Remark}
\newsiamremark{hypothesis}{Hypothesis}
\crefname{hypothesis}{Hypothesis}{Hypotheses}
\newsiamthm{claim}{Claim}

\newsiamremark{assumption}{Assumption}

\headers{Cholesky Factorizations of Finite Horizon Gramians}{T. Stillfjord and F. Tronarp}

\title{Computing the matrix exponential and the Cholesky factor of a related finite horizon Gramian
\thanks{Submitted to the editors 2025-05-16\funding{TS and FT were partially supported by the Wallenberg AI, Autonomous Systems and Software Program (WASP) funded by the Knut and Alice Wallenberg Foundation. TS was additionally partially supported by the Swedish Research Council under grants 2023-03982 and 2023-04862.
}}}

\author{Tony Stillfjord\thanks{Centre for Mathematical Sciences, Lund University, Sweden (\email{tony.stillfjord@math.lth.se}, \email{filip.tronarp@matstat.lu.se}).}
  \and
  Filip Tronarp\footnotemark[2]}

\usepackage{amsopn}


\begin{document}

\maketitle

\begin{abstract}
In this article, an efficient numerical method for computing both the matrix exponential and a finite horizon controllability Gramian in Cholesky-factored form is proposed.
The method is applicable to general dense matrices of moderate size and produces a Cholesky factor of the Gramian without computing the full product.
It is a generalization of the scaling-and-squaring approach for approximating the matrix exponential and exploits a similar doubling formula for the Gramian to compute
both at once. The required computational effort is thereby kept modest.
Most importantly, a rigorous backward error analysis is provided, which guarantees that the approximation is accurate to the round-off error level in double precision.
This accuracy is illustrated in practice on a large number of standard test examples.
The method has been implemented in the Julia package FiniteHorizonGramians.jl, which is available online under the MIT license.
Code for reproducing the experimental results is included in this package, as well as code for determining the optimal method parameters.
The analysis can thus easily be adapted to a different finite-precision arithmetic.
\end{abstract}

\begin{keywords}
Cholesky factorization, Finite horizon Gramian, Numerical method, Error analysis
\end{keywords}

\begin{MSCcodes}
65F60, 15A23, 93B40
\end{MSCcodes}


\section{Introduction}
Consider a pair of matrices $A \in \mathbb{R}^{n \times n}$ and $B \in \mathbb{R}^{n \times m}$.
This article is concerned with the numerical approximation of two matrix functions $\Phi(A)$ and $G(A, B)$, defined by
\begin{subequations}
\begin{align}
\Phi(A) &= e^{A}, \label{eq:exponential} \\
G(A, B) &= \int_0^1 e^{A \tau} BB^* e^{A^* \tau} \dif \tau \label{eq:gramian},
\end{align}
\end{subequations}
where $^*$ denotes the Hermitian conjugate.
The first function, $\Phi$, is just the matrix exponential with specialized notation, which shall simplify the subsequent discussion.
The second function, $G$, is the controllability Gramian of the pair $(A, B)$ over the unit interval, and may equivalently be characterized as
the solution at the end-point of the following Lyapunov differential equation
\begin{equation}\label{eq:LDE}
\dot{Q}(t) = A Q(t) + Q(t) A^* + B B^*, \quad Q(0) = 0, \quad t \in [0, 1],
\end{equation}
that is $G(A, B) = Q(1)$ \cite{AbouKandil2012} and more generally
\begin{equation}\label{eq:gramian-reexpression}
Q(t) = G(A t, B \sqrt{t}) = t \int_0^1 e^{A t \tau} BB^* e^{A^* t \tau} \dif \tau = \int_0^t e^{A \tau} BB^* e^{A^* \tau} \dif \tau.
\end{equation}
The controllability Gramian is always positive semi-definite. If the pair $(A, B)$ is controllable, then it is positive definite \cite{Antsaklis2007}.
Therefore, $G$ has a Cholesky factorization $G(A, B) = U^*(A, B) U(A, B)$ for some upper triangular matrix function $U(A, B)$\footnote{
However, when the Gramian fails to be positive definite, then the Cholesky factor is not unique.
}.

\subsection{Computational issues}
\label{sec:problem-statement}
When $(A, B)$ is not controllable then $G(A, B)$ is singular \cite{Antsaklis2007}.
Furthermore, even if $(A, B)$ controllable it follows from \eqref{eq:gramian-reexpression} that
\begin{equation*}
G(A t, B \sqrt{t}) = B B^* t + o(t).
\end{equation*}
Consequently, the Gramian over the interval $[0, t]$ can become arbitrarily close to singular.
This means that a Cholesky factor can not reliably be obtained in general via the Cholesky algorithm applied to a computed $G$, as the algorithm will inevitably fail when $G$ has small eigenvalues \cite[Theorem 3.2]{Demmel1989}.
Furthermore, the pivoted Cholesky algorithm can not be guaranteed to be numerically stable when $n$ is moderately large and the numerical rank of $G$ is not small \cite[Section 10.3.2]{Higham1996}.
This is precisely the case when the uncontrollable subspace of $(A, B)$ is small and $n$ is moderately large, thus ruling out pivoted Cholesky in those situations.

On the other hand, from \eqref{eq:gramian-reexpression} it can be seen that $G(A t, B \sqrt{t})$ satisfies the following algebraic Lyapunov equation:
\begin{equation}
A G(A t, B \sqrt{t}) + G(A t, B \sqrt{t}) A^* + D(At, B) = 0, \quad   D(A t, B) = B B^* - e^{A t} B B^* e^{A^* t}.  \label{eq:ale}
\end{equation}
If $A$ is Hurwitz (all eigenvalues have negative real part) and $D$ is non-negative definite,
then \eqref{eq:ale} has a unique positive definite solution~\cite[Corollary 1.1.4]{AbouKandil2012}.
The Gramian can the be obtained by first solving \eqref{eq:ale} for $G$ followed by a Cholesky factorization.
However, $D(A t, B)$ can become numerically indefinite for sufficiently small $t$, which prohibits this approach in general.

On the other hand, given a square-root factor $D^{1/2}(A t, B)$ the algorithm by Hammarling~\cite{Hammarling1982} is an efficient means to obtain a Cholesky factor of $G$,
but obtaining $D^{1/2}(A t, B)$ is non-trivial.
Using the existing toolbox of numerical linear algebra,
the strategy would be to obtain a Cholesky factor of $B B^*$ via the QR decomposition and then obtain $D^{1/2}$ by Cholesky downdates,
but the downdate problem can be poorly conditioned and even fail completely \cite{Elden1994,Stewart1979}.
Therefore, due to the Hurwitz requirement on $A$ and the preceding discussion, computing the Gramian via \eqref{eq:ale} is not applicable in general, nor generally reliable when it is applicable.\footnote{
The first experiment of Section \ref{sec:experiments} examines a concrete class of problems where $A$ is not Hurwitz.
}
Thus there does not appear to exist a general purpose method for reliably obtaining a Cholesky factor of $G$.

\subsection{Contribution}
The aim is to develop a numerical algorithm for the computation of both the matrix exponential $\Phi(A)$
and a Cholesky factor $U(A, B)$ without forming $G(A, B)$ as an intermediate step.
This has applications in numerically robust implementations of linear filters and smoothers,
via the so called square-root or array algorithms  \cite{AndersonMoore1979,Kailath2000},
and in robust state-space balancing and truncation algorithms \cite{Antoulas2005}.
The approach proposed here is based on a certain doubling recursion for both $\Phi$ and $G$ \cite{VanLoan1978},
which extends the scaling and squaring method of the matrix exponential \cite{Higham2005, Higham2008, AlMohy2010}.
This is similar to the doubling recursion for computing both the matrix exponential and its Fr\'echet derivative \cite[Section 10.6]{Higham2008},
which was turned into a serviceable algorithm by \cite{AlMohy2009} by computing the Fr\'echet derivative of the initial Pad\'e approximation of the matrix exponential.
However, this provides no information on how to construct the initial approximations which are to be doubled.
That is accomplished here by drawing on a connection between the diagonal Pad\'e approximants and a certain Petrov--Galerkin approximation of $\Phi(A t)$ on the interval $[0, 1]$ \cite{Moore2011}.
This allows for the development of an algorithm that almost computes $\Phi$ in the conventional way while also providing an initial approximation to the Cholesky factor $U$,
both of which are then propagated through the doubling recursion.

The error analysis for $\Phi$ is the same as for the conventional method \cite{Higham2005},
while the error analysis for $G$ is more intricate.
By lifting the problem to the computation of the Gramian functional
\begin{equation}\label{eq:gramian-functional}
\mathcal{G}(\tilde{A}, \tilde{B}) = \int_0^1 e^{ \tilde{A}(t)} \tilde{B}(t) \tilde{B}^*(t) e^{\tilde{A}^*(t)} \dif t,
\end{equation}
such that $G(A, B) = \mathcal{G}(t \mapsto At, t \mapsto B)$, it is demonstrated that the proposed method is backward stable,
in the sense that there are small perturbations $\Delta A(t)$ and $\Delta B(t)$ such that the approximated Gramian is exactly given by
\begin{equation*}
\mathcal{G}(At + \Delta A, B + \Delta B).
\end{equation*}
Norm bounds on $2^{-s} A$ are obtained which guarantee that
the relative errors in the perturbed data $\Delta A$ and $\Delta B$ are bounded by unit round-off in the induced supremum norm.
The norm of $B$ turns out to be irrelevant for the backward errors.
Additionally, the relative condition number of $\mathcal{G}$ is estimated,
thus giving an estimate of the relative forward error \cite[Section 3]{Higham2008}.

As the algorithm closely resembles the classical scaling and squaring algorithm with Pad\'e approximants,
it is expected to be appropriate to use under the same circumstances.
This is not only corroborated by the error analysis but also through numerical experiments.

The proposed algorithm and the analysis informing its design has been implemented
in the Julia programming language \cite{Bezanson2017}.
The resulting software package, FiniteHorizonGramians.jl\footnote{
\url{https://github.com/filtron/FiniteHorizonGramians.jl}.
},
is released under the MIT license.

The rest of the article is organized as follows.
This section concludes with a discussion on related work.
The main ideas behind the algorithm construction are established in Section~\ref{sec:algorithm-sketch}.
In Section~\ref{sec:backward-error-analysis}, a backward error analysis is presented, which is complemented by a forward error analysis based on relative condition number estimates in Section~\ref{sec:forward-error-analysis}. The rank properties of the approximated Gramian in relation to the true Gramian are discussed in Section~\ref{sec:rank}.
The final algorithm design is settled in Section~\ref{sec:algorithm_design}, followed by several numerical experiments in Section~\ref{sec:experiments}.
Concluding remarks are given in Section~\ref{sec:conclusion}.

\subsection{Related work}
The development of methods for computing matrix exponentials have a long history \cite{Moler1978}.
A prominent approach is based on the scaling and squaring method using various base approximations \cite{Moler2003},
where the Pad\'e approximations have become preferred \cite{Higham2005, Higham2008,AlMohy2010,Guttel2016}.
Computing the matrix exponential and some associated quantity has been done by \cite{AlMohy2009}, in the case of the Fr\'echet derivative.
However, to the authors' knowledge no such development has been done for computing Cholesky factors of finite horizon Gramians.
In the following, the literature closest to the present contribution is reviewed.

\paragraph{Full finite horizon Gramians}
A natural but not necessarily good idea is to first compute a full, non-factorized, Gramian and then compute a Cholesky factorization of it.
There are several methods for computing Gramians, as outlined in e.g.~\cite{Axelsson2014}.
The ones most generally applicable utilize Radon's lemma~\cite[Theorem 3.1.1]{AbouKandil2012}
and compute the exponential of a matrix $J$ which is twice as large as $A$, followed by a linear solve.
This approach is sometimes known as the matrix fraction decomposition method.
Van Loan~\cite{VanLoan1978} exploits the block triangular structure of $J$ arriving at the same doubling recursion as in the algorithm proposed in this paper.
However, that work did not consider the factorization of the Gramian, and only provides a forward error analysis.
As mentioned in Section~\ref{sec:problem-statement}, obtaining a Cholesky factor from a computed Gramian can not reliably be done with conventional methods.

\paragraph{Large-scale Lyapunov equations}
Many numerical methods have been suggested for solving large-scale differential Lyapunov equations~\eqref{eq:LDE},
e.g.\ BDF and Rosenbrock methods~\cite{BennerMena2016,Mena07}, projection methods~\cite{KoskelaMena2020,BehrBennerHeiland2019,KirstenSimoncini2020},
exponential integrators~\cite{LiZhangLiu2021}, splitting schemes~\cite{OstermannPiazzolaWalach2019,Stillfjord2015,Stillfjord2018},
and numerical quadrature~\cite{Stillfjord2015,Stillfjord2018}. Most of these methods are designed for differential Riccati equations,
but they reduce to methods for Lyapunov equations by setting the nonlinear term to zero.
The focus in all these works has been on low-rank approximations $G(A,B) \approx U^* U$,
where $U^* \in \R^{n \times r}$ with $r \ll n$ and large $n$.
With the exception of the Krylov methods, these methods lack rigorous error analyses.
In practice, they typically produce approximations with errors in the range $[10^{-3}, 10^{-12}]$.
This is in contrast to the present interest,
which is to obtain full-rank upper triangular Cholesky factors $U \in \R^{n \times n}$, for $n$ moderately small,
which are fully accurate in finite precision arithmetic.

The low-rank approach has also been used for large-scale algebraic Lyapunov equations, and the representation~\eqref{eq:ale} was used by e.g.\ K\"{u}rschner~\cite{Kuerschner2018} to compute finite horizon Gramians. For an overview of the wide spread of methods, see the surveys~\cite{BennerSaak2013,Simoncini2016}.
Most notable is, perhaps, the commonly used LRCF-ADI method~\cite{BennerLiPenzl2008,LiWhite2002}.

\section{Sketch of algorithm}\label{sec:algorithm-sketch}
In this section, the main ideas of the proposed algorithm are established.
In particular, the doubling formula for both $\Phi$ and $G$ is established in Section~\ref{subsec:doubling}.
The doubling recursion requires initial approximations of $\Phi$ and $G$, and such approximations based on a
Petrov--Galerkin method in a shifted Legendre basis are reviewed in Section~\ref{subsec:initialization}.

\subsection{Doubling formulae}\label{subsec:doubling}
It is well known that the matrix exponential satisfies the doubling formula $\Phi(A) = \Phi^2(A/2)$,
which is a crucial component of the standard algorithm for numerically computing matrix exponentials \cite{Higham2005}.
It is perhaps less known that $G$ also satisfies a doubling formula, but such ideas have been around for a long time \cite[Section 6.7]{AndersonMoore1979}, \cite{VanLoan1978}.
The result is as follows.
\begin{lemma}
The function $G(A,B)$ satisfies the following doubling formula
\begin{equation*}
G(A,B) = G(A/2,B/\sqrt{2}) + e^{A/2} G(A/2,B/ \sqrt{2}) e^{A^* /2}.
\end{equation*}
\end{lemma}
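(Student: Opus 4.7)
The plan is to work directly from the integral representation $G(A,B) = \int_0^1 e^{A\tau} BB^* e^{A^*\tau}\,\dif \tau$ and split the interval $[0,1]$ at its midpoint, handling each half by a change of variables that rescales time by a factor of two. The target identity has the structure ``near half + propagated far half'', which is exactly what splitting at $\tau = 1/2$ produces, so no clever trick should be needed beyond careful substitution and using that scalar factors can be absorbed into $B$ quadratically.

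Concretely, I would first write
\begin{equation*}
G(A,B) = \int_0^{1/2} e^{A\tau} BB^* e^{A^*\tau}\,\dif \tau + \int_{1/2}^{1} e^{A\tau} BB^* e^{A^*\tau}\,\dif \tau .
\end{equation*}
In the first integral the substitution $\tau = s/2$, $\dif\tau = \dif s/2$, turns the integrand into $e^{(A/2)s}\,\tfrac{1}{2}BB^*\,e^{(A/2)^* s}$, and since $\tfrac12 BB^* = (B/\sqrt2)(B/\sqrt2)^*$, the first integral equals $G(A/2, B/\sqrt2)$.

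For the second integral the substitution $\tau = 1/2 + s/2$, $\dif\tau = \dif s/2$, yields $e^{A/2} e^{(A/2)s}\,\tfrac12 BB^*\,e^{(A/2)^* s} e^{A^*/2}$; the constant factors $e^{A/2}$ and $e^{A^*/2}$ pull outside the integral because they do not depend on $s$, giving $e^{A/2} G(A/2, B/\sqrt2) e^{A^*/2}$. Summing the two contributions produces the claimed doubling formula.

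I do not anticipate a genuine obstacle here: the argument is a one-line change of variables once the correct split point is chosen, and the only bookkeeping subtlety is verifying that the factor $1/2$ from the Jacobian combines correctly with $B$ to produce $B/\sqrt2$ inside the rescaled Gramian. The same identity could alternatively be derived from the Lyapunov differential equation~\eqref{eq:LDE} by concatenating solutions on $[0,1/2]$ and $[1/2,1]$ using the variation-of-constants formula, but the integral-based derivation above is the most direct.
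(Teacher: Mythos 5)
Your proof is correct and follows essentially the same route as the paper's: split the integral at $\tau = 1/2$, pull $e^{A/2}$ and $e^{A^*/2}$ out of the second half, and identify each half with $G(A/2, B/\sqrt{2})$ via a time rescaling. You are in fact slightly more explicit than the paper about the substitution $\tau = s/2$ and how the Jacobian factor $1/2$ is absorbed into $B/\sqrt{2}$, which the paper leaves implicit.
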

\begin{proof}
Split the integral defining $G(A,B)$ in half:
\begin{equation*}
\begin{split}
G(A,B) &=
\int_0^{1/2} e^{A \tau} B B^* e^{A^* \tau} \dif \tau +
e^{A/2}\int_{1/2}^1 e^{A (\tau - 1/2)} B B^* e^{A^* (\tau - 1/2)} \dif \tau \,e^{A^*/2} \\
&= G(A/2,B/\sqrt{2}) + e^{A/2} G(A/2,B/\sqrt{2}) e^{A^*/2}.
\end{split}
\end{equation*}
Here, the last step is a change of variables $\tau \mapsto 2( \tau - 1/2 )$ in the second integral.
\end{proof}
Now let $s$ be an integer and define the following series of functions for $k=0,1,\ldots, s$:
\begin{align*}
\Phi_k(A) &= \Phi(A / 2^{s-k}), \\
G_k(A, B) &= G( A / 2^{s-k}, B / \sqrt{2^{s-k}}).
\end{align*}
From the doubling formulae, a recursion for $\Phi_k$ and $G_k$ is readily obtained:
\begin{subequations} \label{eq:exp-and-gram-doubling}
\begin{align}
\Phi_{k+1}(A) &= \Phi_k^2(A), \label{eq:exp-doubling} \\
G_{k+1}(A, B) &= \Phi_k(A) G_k(A, B)  \Phi_k^*(A) + G_k(A, B). \label{eq:gram-doubling}
\end{align}
\end{subequations}
This suggests that an efficient algorithm for approximating $\Phi$ and $G$ may be obtained by simply
adding some extra computations to the scaling and squaring algorithm for the matrix exponential.
However, the goal is to compute a Cholesky factor of the Gramian, which can be achieved by factoring $G_k(A,B) = U_k(A,B)^*U_k(A,B)$. By~\eqref{eq:gram-doubling}, the factor $U_k(A,B)$ satisfies the recursion
\begin{equation*}
U_{k+1}^*(A,B)U_{k+1}(A,B) =
\begin{bmatrix}  U_k(A,B) \Phi_k^*(A) \\ U_k(A,B) \end{bmatrix}^*
\begin{bmatrix}  U_k(A,B) \Phi_k^*(A) \\ U_k(A,B) \end{bmatrix},
\end{equation*}
and $U_{k+1}(A,B)$ may then be obtained by taking the upper triangular factor of the QR decomposition of the last factor on the right-hand side.
For a complete algorithm, it remains to obtain the initial approximations of $\Phi_0(A)$ and $U_0(A, B)$.
An approach for this is developed in the following.

\subsection{The initial approximations}\label{subsec:initialization}
In view of the doubling formula in \eqref{eq:exp-and-gram-doubling},
it remains to obtain initial approximations $\widehat{\Phi}_0$ and $\widehat{G}_0$ of the matrix exponential $\Phi_0$ and
the Gramian $G_0$, respectively.
For this purpose, define $A_s = A / 2^s$ and $B_s = B / \sqrt{2^s}$.
Then approximations are sought for
\begin{align*}
\Phi_0(A) &= \Phi(A_s), \\
G_0(A,B)  &= G( A_s, B_s).
\end{align*}
Consider an order $q$ expansion of $t \mapsto \Phi(A_s t)$ in terms of Legendre polynomials $P_k$ on the unit interval:
\begin{equation}\label{eq:legendre-expansion}
\widehat{E}_q(t) = \sum_{k=0}^q C_k(A_s) P_k(t) \approx \Phi(A_s t) .
\end{equation}
Note that, in particular, $\Phi(A_s)$ is approximated by $\widehat{E}_q(1) = \sum_{k=0}^q C_k(A_s)$, since $P_k(1) = 1$. There are various ways of constructing the coefficients $C_k(A_s)$ in \eqref{eq:legendre-expansion},
such as projections in $\mathscr{L}_2([0, 1])$ or various Petrov--Galerkin type methods.
In this paper, a specific Petrov--Galerkin method is used for which $\sum_{k=0}^q C_k(A_s)$ coincides with the usual diagonal Pad\'e approximation to $\Phi(A_s)$~\cite{Moore2011}. It requires that the coefficients satisfy $C_k(A_s) = (2k+1)\tilde{C}_k$ where
\begin{equation}\label{eq:Ck_Moore}
\begin{bmatrix}
\mathrm{I} & - 3 \mathrm{I} & 5 \mathrm{I}  & \ldots & \ldots                & \ldots \\
-A_s       & 6 \mathrm{I}   & A_s          & 0      & \ldots                & \vdots \\
0          & -A_s           & 10 \mathrm{I} & \ddots & \ddots                & \vdots \\
\vdots     & \ddots         & \ddots        & \ddots & A_s                & \vdots \\
\vdots     & \ddots         & \ddots        & \ddots & (4q - 2) \mathrm{I}   & 0      \\
0          & \ldots         & \ldots        & 0      & -A_s                  & (4q + 2) \mathrm{I}
\end{bmatrix}
\begin{bmatrix}
\tilde{C}_0 \\ \tilde{C}_1 \\ \vdots \\ \vdots \\ \tilde{C}_{q-1} \\ \tilde{C}_q
\end{bmatrix}
=
\begin{bmatrix}
\mathrm{I} \\ 0 \\ \vdots \\ \vdots \\ 0 \\ 0
\end{bmatrix}.
\end{equation}
If this holds, then
they are rational functions in $A_s$, say,
\begin{equation*}
C_k(A_s) = D_q^{-1}(A_s) L_k(A_s).
\end{equation*}
Their sum evaluates to
\begin{equation} \label{eq:def_rq}
\sum_{k=0}^q C_k(A_s) = D_q^{-1}(A_s) N_q(A_s) = r_q(A_s),
\end{equation}
where $N_q$ and $D_q$ are the numerator and denominator, respectively, in the diagonal Pad\'e approximation $r_q$ of $e^z$,
see~\cite{Moore2011} and references therein\footnote{
Unless it is necessary for the clarity of exposition, the explicit dependence on $A_s$ in $C_k(A_s)$, $D_q(A_s)$ and $N_q(A_s)$ will henceforth be suppressed.
}.
Tables of coefficients for computing $\{C_k\}_{k=0}^{q}$, $D_q$ and $N_q$ have been generated symbolically in Julia
using the Symbolics.jl package \cite{Gowda2022}.
They are provided in Appendix \ref{sec:coefficient_tables} for $q = 3, 5, 7, 9, 13$.

By replacing $\Phi(A_s \tau)$ by $\widehat{E}_q(\tau)$ in~\eqref{eq:gramian} and using that $\{P_k\}_{k=0}^q$ is a sequence of orthogonal functions which satisfy $\norm{P_k}^2 =  \frac{1}{2k+1}$, it then follows that $\Phi_0(A)$ and $G_0(A, B)$ can be approximated by
\begin{subequations}\label{eq:initial-approximation}
\begin{align}
\widehat{\Phi}_0(A) &= \widehat{E}_q(1) =  \sum_{k=0}^q C_k \label{eq:initial-exp}, \\
\widehat{G}_0(A, B)  &=   \sum_{k=0}^q \frac{1}{2k + 1}  C_k B_s B_s^* C_k^*,
\end{align}
\end{subequations}
respectively.
Furthermore, the Gramian approximation may be written as $\widehat{G}_0(A,B) = \tilde{U}_0^* \tilde{U}_0$, where
\begin{equation}\label{eq:initial-cholesky}
\tilde{U}_0^* =
\begin{bmatrix}
C_0 B_s & C_1 B_s / \sqrt{3} & \cdots & C_k B_s / \sqrt{2k + 1} & \cdots & C_q B_s / \sqrt{2q + 1},
\end{bmatrix}
\end{equation}
and an upper triangular (not necessarily square) square-root factor of $\widehat{G}_0(A, B)$ may be obtained by the QR factorization.

\section{Backward error analysis}\label{sec:backward-error-analysis}
In this section, error analysis of the Gramian approximation is performed.
Throughout the paper, the unit round-off for the chosen finite precision arithmetic will be denoted by $u$.
In most applications, this will be $2^{-53} \approx 1.1 \cdot 10^{-16}$, corresponding to IEEE double precision,
but most of the analysis is agnostic to the specific value.

From the analysis of \cite{Moore2011}, it follows that
\begin{align*}
V_t(A_s)             &= - A_s C_q \int_0^t e^{-A_s\tau} P_q(\tau) \dif \tau, \\
\widehat{E}_q(t) &= e^{A_s t} \big( \mathrm{I}  + V_t(A_s) \big),
\end{align*}
where for each $t$, $V_t(A_s)$ is a matrix function in $A_s$.
When $\norm{V_t(A_s)} < 1$ a backwards error of $\widehat{E}_q$ is given by
\begin{equation*}
F_t(A_s) = \log\big( \mathrm{I}  + V_t(A_s) \big),
\end{equation*}
and since $V_t(A_s)$ is a matrix function in $A_s$, then so is $F_t(A_s)$.
Furthermore, $F_1(A_s)$ is the corresponding backward error for the Pad\'{e} approximation to the matrix exponential,
since $\widehat{E}_q(1) = r_q(A_s)$ by \eqref{eq:initial-exp} and \eqref{eq:def_rq}.
Before proceeding, recall the following definitions from Higham~\cite{Higham2005}.

\begin{itemize}
\item $\theta_q$ is a number such that $2^s \norm{F_1(A_s)} \leq \norm{A}u$ whenever $\norm{A_s} \leq \theta_q$.
\item $\nu_q$ is the maximal radius around the origin for which the Pad\'e denominator, $D_q(z)$, is analytic; $\nu_q = \min \{ z\colon D_q(z) = 0\}$.
\item $\norm{D_q^{-1}(A_s)} \leq \xi_q$ whenever $\norm{A_s} \leq \theta_q$.
\end{itemize}
The following result was obtained in~\cite{Higham2005}, where $\theta_q$ was tabulated for $u = 2^{-53}$.
\begin{proposition}\label{prop:higham2005}
Let $s \geq \max\big(0, \log_2 \frac{\norm{A}}{\theta_q}  \big)$, then
\begin{equation*}
e^{-A_s} r_q(A_s) = \mathrm{I} + V_1(A_s) = e^{F_1(A_s)},
\end{equation*}
where $V_1(A_s)$ and $F_1(A_s)$ are well-defined matrix functions of $A_s$ which commute with $A_s$.
Furthermore, $\widehat{\Phi}(A) = e^{A + 2^s F_1(A_s)}$ and
\begin{equation*}
\frac{\norm{2^s F_1(A_s)}}{\norm{A}} \le u \hide{\approx 1.1 \cdot 10^{-16}},
\end{equation*}
so that $\widehat{\Phi}(A) = r_q^{2^s}(A_s)$ approximates $e^{A}$ to full accuracy in finite precision arithmetic, in the backward error sense.
Lastly, $\theta_q \leq \nu_q$ for $q \leq 21$.
\end{proposition}
This result applies to the present approximation to the matrix exponential as it is computed in exactly the same way.
However, the error analysis for the Gramian is more involved and shall be pursued in the following. As in~\cite{Higham2005}, algorithm parameters are  tabulated for $u = 2^{-53}$, but the analysis can easily be adapted to a different choice of finite precision arithmetic.

\subsection{Backwards error of Gramian}
It appears infeasible to obtain a backwards error for $G$ directly.
However, the problem can be \emph{lifted} to the computation of the Gramian functional \eqref{eq:gramian-functional},
whose definition is restated:
\begin{equation*}
  \mathcal{G}(\tilde{A}, \tilde{B}) = \int_0^1 e^{ \tilde{A}(t)} \tilde{B}(t) \tilde{B}^*(t) e^{\tilde{A}^*(t)} \dif t .
\end{equation*}
Then $\mathcal{G}(t \mapsto At, t \mapsto B) = G(A, B)$, so if there exist matrix-valued functions $\widehat{A}(t)$ and $\widehat{B}(t)$ such that
$\widehat{G}(A, B) = \mathcal{G}(\widehat{A}(t), \widehat{B}(t))$ a backward result is obtained.
The  backward errors $\Delta A$ and $\Delta B$ given by $\Delta A(t) = \widehat{A}(t) - At$ and $\Delta B(t) = \widehat{B}(t) - B$ can then be controlled in the induced supremum norm defined as
\begin{equation*}
\fnorm{f} = \sup_{t \in [0,1]} \norm{f(t)},
\end{equation*}
where $\norm{\cdot}$ is a given matrix norm.
A subscript is added for specific matrix norms, e.g.\ $\fnormtwo{f} = \sup_{t \in [0,1]} \normtwo{f(t)}$.
It should be noted that specifically $\fnorm{At} = \norm{A}$, and $\fnorm{B} = \norm{B}$. Here, and in the following, $At$ and $B$ refers to the functions $t \mapsto At$ and $t \mapsto B$, respectively,
rather than their corresponding function values at a specific $t$. This abuse of notation makes many of the equations significantly less cumbersome and should not lead to confusion.

The starting point for obtaining the perturbed data $\widehat{A}$ and  $\widehat{B}$ is to use the doubling formula and the discrete variation of constants formula so that $\widehat{G}$ is given by
\begin{equation}\label{eq:gramian-approximation}
\widehat{G} = \sum_{m=0}^{2^s - 1} r_q^m(A_s) \widehat{G}_0 r_q^m(A_s^*).
\end{equation}
The idea is then to use this formula to construct $\widehat{A}(t)$ and $\widehat{B}(t)$ such that
\begin{equation*}
\widehat{G} = \mathcal{G}(\widehat{A}(t), \widehat{B}(t)).
\end{equation*}
This is done by dividing the interval $[0, 1]$ into $2^s-1$ equally large sub-intervals of length $2^{-s}$ and then matching terms. It holds that
\begin{equation*}
\begin{split}
  &\mathcal{G}(\widehat{A}(t), \widehat{B}(t)) \\
  &\quad= \int_0^1e^{ \widehat{A}(t) } \widehat{B}(t)\widehat{B}^*(t) e^{\widehat{A}^*(t)}\dif t \\
&\quad= \sum_{k=0}^{2^s-1} \int_{k2^{-s}}^{(k+1)2^{-s}} e^{ \widehat{A}(t) } \widehat{B}(t)\widehat{B}^*(t) e^{\widehat{A}^*(t)} \dif t \\
&\quad= \sum_{k=0}^{2^s-1} 2^{-s}\int_0^1 e^{ \widehat{A}(k2^{-s} + t2^{-s}) } \widehat{B}(k2^{-s} + t2^{-s})\widehat{B}^*(k2^{-s} + t2^{-s}) e^{\widehat{A}^*(k2^{-s} + t2^{-s})} \dif t. \\
\end{split}
\end{equation*}
A match can thus be obtained by first setting
\begin{equation*}
\begin{split}
&2^{-s}\int_0^1 e^{ \widehat{A}(k2^{-s} + t2^{-s}) } \widehat{B}(k2^{-s} + t2^{-s})\widehat{B}^*(k2^{-s} + t2^{-s}) e^{\widehat{A}^*(k2^{-s} + t2^{-s})} \dif t\\
&\quad = r_q^k(A_s)  \Big(  \int_0^1 e^{A_s t} (e^{F_t(A_s)} B_s) (e^{F_t(A_s)} B_s)^* e^{A_s^* t} \dif t  \Big) r_q^k(A_s^*)\\
&\quad =2^{-s} e^{A k2^{-s} + kF_1(A_s)}  \Bigl(  \int_0^1 e^{At2^{-s}} (e^{F_t(A_s)} B) (e^{F_t(A_s)} B)^* e^{A^* 2^{-s}t} \dif t  \Bigr) e^{A^* k2^{-s} + kF_1^*(A_s)},
\end{split}
\end{equation*}
for $k = 0,1,\ldots, 2^s -1$, and then setting
\begin{equation*}
e^{ \widehat{A}(k2^{-s} + t2^{-s}) } \widehat{B}(k2^{-s} + t2^{-s}) =  e^{Ak2^{-s} + kF_1(A_s) + At2^{-s}} e^{F_t(A_s)} B,
\end{equation*}
for $t \in [0, 1)$, or, by the change of variables $t \mapsto k2^{-s} + t2^{-s}$,
\begin{equation}\label{eq:matching-condition}
e^{ \widehat{A}(t) } \widehat{B}(t) =  e^{At + kF_1(A_s)} e^{F_{2^s t-k}(A_s)} B, \quad t \in [k2^{-s}, (k + 1)2^{-s}).
\end{equation}
From this, a backwards result for the approximated Gramian follows quickly, as formalized in the following proposition.
\begin{proposition}\label{prop:gramian-backward-error}
Let $\fnorm{V_t(A_s)} < 1$. Then
\begin{equation*}
\widehat{G}(A, B) = \mathcal{G}(At + \Delta A, B + \Delta B),
\end{equation*}
where
\begin{align*}
\Delta A(t) &= kF_{1}(A_s), \\
\Delta B(t) &= ( e^{F_{2^s t-k}(A_s)}  - \mathrm{I}) B,
\end{align*}
for $t \in [k2^{-s}, (k + 1)2^{-s})$ and $k = 0, \ldots, 2^s-1$.
Furthermore, the relative errors $\Delta A(t)$ and $\Delta B(t)$, respectively, are bounded by
\begin{align*}
\frac{\fnorm{\Delta A(t)}  }{\fnorm{At}} &\leq 2^s\frac{\norm{F_1(A_s)}}{\norm{A}}, \\
\frac{\fnorm{\Delta B(t)}  }{\fnorm{B}}  &\leq \fnorm{V_t(A_s)}.
\end{align*}
\end{proposition}

\begin{proof}
That $\widehat{A}(t) = At + \Delta A(t)$ and $\widehat{B}(t) = B + \Delta B(t)$ satisfy the matching condition \eqref{eq:matching-condition} is readily verified.
Furthermore, a bound on the relative error of $\Delta A(t)$ is given by
\begin{equation*}
\begin{split}
\frac{\fnorm{\Delta A(t)}  }{\fnorm{At}}
&= \frac{\max_{k} \sup_{t \in [k 2^{-s}, (k+1)2^{-s}]} \norm{kF_1(A_s)} }{\norm{A}}
= \frac{\max_{k} \norm{kF_1(A_s)} }{\norm{A}} \\
&= (2^s - 1) \frac{\norm{F_1(A_s)}}{\norm{A}} \leq 2^s \frac{\norm{F_1(A_s)}}{\norm{A}}.
\end{split}
\end{equation*}
Finally, a bound for the relative error in $\Delta B$ is found by
\begin{equation*}
\begin{split}
\frac{\fnorm{\Delta B(t)}}{\fnorm{B}}
&= \max_k \sup_{t \in [k 2^{-s}, (k+1)2^{-s}]} \norm{( e^{F_{2^s t-k}(A_s)}  - \mathrm{I}) B } \\
&= \frac{\fnorm{ (e^{F_t(A_s)}  - \mathrm{I} ) B }}{ \norm{B}  }
\leq  \fnorm{e^{F_t(A_s)}  - \mathrm{I}}
= \fnorm{V_t(A_s)}.
\end{split}
\end{equation*}
\end{proof}

\subsection{Controlling the backward error of the approximated Gramian}
In view of Propositions~\ref{prop:higham2005} and~\ref{prop:gramian-backward-error}, it remains to find a maximal value $\eta_q$ such that $\norm{A_s} < \eta_q$ implies
\begin{equation*}
\fnorm{V_t(A_s)} < u.
\end{equation*}
The scaling parameter $s$ may then be selected as
\begin{equation*}
s = \Big\lceil \log_2 \frac{\norm{A}}{\min(\eta_q,\theta_q)} \Big\rceil,
\end{equation*}
to ensure that both the errors in the approximated matrix exponential and the Gramian are at most unit round-off (in the chosen finite precision arithmetic).
Before proceeding, the following result which gives a more explicit expression for $V_t$ is required.
\begin{proposition}\label{prop:explicit-V}
For the last coefficient $C_q$ in the Legendre expansion of $e^{A_s t}$ it holds that
\begin{equation*}
C_q = \frac{q!}{(2q)!} (-A_s)^q D_q^{-1}(A_s)
\end{equation*}
and as a consequence an explicit expression for $V_t$ is given by
\begin{equation*}
V_t(A_s) =  \frac{q!}{(2q)!} (-A_s)^{q+1} D_q^{-1}(A_s) \int_0^t e^{- A_s \tau} P_q(\tau) \dif \tau.
\end{equation*}
\end{proposition}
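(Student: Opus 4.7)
The plan is to establish the closed form for $C_q$ first; the formula for $V_t(A_s)$ then follows by direct substitution into the identity $V_t(A_s) = -A_s C_q \int_0^t e^{-A_s\tau} P_q(\tau)\dif\tau$ recorded at the start of Section~\ref{sec:error-analysis}, using that $-A_s \cdot (-A_s)^q = (-A_s)^{q+1}$. So the substantive content is the formula for $C_q$.

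My approach exploits the two independent expressions for $V_1$ already at hand: on one hand $V_1 = e^{-A_s} r_q(A_s) - \mathrm{I}$, and on the other $V_1 = -A_s C_q \int_0^1 e^{-A_s\tau} P_q(\tau)\dif\tau$. Equating them and multiplying through by $D_q(A_s)$ gives $e^{-A_s} N_q(A_s) - D_q(A_s) = -A_s\, C_q D_q(A_s) \int_0^1 e^{-A_s\tau} P_q(\tau)\dif\tau$, and the strategy is to rewrite both sides as the same elementary integral so that $C_q$ drops out directly.

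For the left-hand side, I would invoke the classical integral representation of the $[q/q]$-Pad\'e remainder, $e^{z} D_q(z) - N_q(z) = \frac{(-1)^q z^{2q+1}}{(2q)!}\int_0^1 e^{zu} u^q(1-u)^q \dif u$; the substitution $u = 1-\tau$, combined with absorbing the factor $e^{-A_s}$, converts this into a scalar multiple of $\int_0^1 e^{-A_s\tau}\tau^q(1-\tau)^q\dif\tau$. For the right-hand side, I would apply the shifted-Legendre Rodrigues formula $P_q(\tau) = \frac{1}{q!}\frac{\dif^q}{\dif\tau^q}\big[\tau^q(\tau-1)^q\big]$ and $q$-fold integration by parts; the boundary terms vanish because $\tau^q(\tau-1)^q$ has zeros of order $q$ at $\tau = 0$ and $\tau = 1$, producing the identity $\int_0^1 e^{-A_s\tau} P_q(\tau)\dif\tau = \frac{(-A_s)^q}{q!}\int_0^1 e^{-A_s\tau}\tau^q(1-\tau)^q\dif\tau$. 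Cancelling the common integral and simplifying the remaining scalar prefactors leaves $C_q D_q(A_s) = \frac{q!}{(2q)!}(-A_s)^q$, which is exactly the claimed formula.

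The main obstacle is sign bookkeeping: the shifted-Legendre Rodrigues sign, the $(\tau-1)^q$-versus-$(1-\tau)^q$ swap in the integrand, and the sign of the Pad\'e remainder each contribute factors of $(-1)^q$ that have to combine cleanly. An alternative attack via Cramer's rule on the linear system in Section~\ref{subsec:initialization}—viewing $T_q$ as a matrix over scalar polynomials in $A_s$, observing that the minor obtained by deleting the first row and last column is upper triangular with $-A_s$ on the diagonal (so its determinant is $(-A_s)^q$), and identifying $\det(T_q)$ with a scalar multiple of $D_q(A_s)$ via \citet{Moore2011}—is also feasible, but merely relocates the same sign-tracking into the identification of that scalar prefactor.
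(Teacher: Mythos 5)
Your route is essentially the paper's: both proofs evaluate the residual identity at $t=1$, equate the resulting Legendre integral with the integral form of the Pad\'e remainder, use Rodrigues' formula plus $q$-fold integration by parts to turn $\int_0^1 e^{\mp A_s\tau}P_q(\tau)\dif\tau$ into the common factor $\int_0^1 e^{\mp A_s\tau}\tau^q(1-\tau)^q\dif\tau$, and cancel it. The only difference is that the paper works with $e^{A_s}-r_q(A_s)$ while you premultiply by $e^{-A_s}$ and clear the denominator $D_q$; this is immaterial. One substantive remark on the sign bookkeeping you flag as the main obstacle: executed exactly as you describe, your computation yields $C_q D_q(A_s)=\frac{q!}{(2q)!}A_s^q$, \emph{without} the factor $(-1)^q$ --- the three signs on the Legendre side ($(-1)^q$ from the $q$-fold integration by parts, $(-1)^q$ from differentiating $e^{-A_s\tau}$, and $(-1)^q$ from $(\tau-1)^q=(-1)^q(1-\tau)^q$) combine to a single $(-1)^q$, which then cancels against the $(-1)^q$ in the Pad\'e remainder. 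This positive sign is in fact the correct one: the tables of Appendix~\ref{sec:coefficient_tables} give, for $q=3$, $\tilde L_3(z)=z^3$ and $\tilde D_3=120\,D_3$, hence $C_3=+\tfrac{1}{120}z^3D_3^{-1}(z)=+\tfrac{3!}{6!}z^3D_3^{-1}(z)$, and a direct solve of the $q=1$ Petrov--Galerkin system gives $C_1=+\tfrac12 z\,(1-z/2)^{-1}$. The proposition as stated (and the paper's own proof, which silently loses a $(-1)^q$ when replacing $(\tau^2-\tau)^q$ by $\tau^q(1-\tau)^q$ after reversing the integration interval) therefore carries a spurious sign for odd $q$ --- precisely the orders $q=3,5,7,9,13$ the algorithm uses. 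The discrepancy is harmless downstream, since only $\norm[0]{V_t(A_s)}$ enters the error analysis, but you should not bend your (correct) computation to land on $(-A_s)^q$; the honest conclusion of your argument is $C_q=\frac{q!}{(2q)!}A_s^qD_q^{-1}(A_s)$ and $V_t(A_s)=-\frac{q!}{(2q)!}A_s^{q+1}D_q^{-1}(A_s)\int_0^t e^{-A_s\tau}P_q(\tau)\dif\tau$.
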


\begin{proof}
Since $\widehat{E}_q(1) = r_q(A_s) = e^{A_s} + e^{A_s}V_1(A_s)$ it holds that
\begin{align*}
A_s C_q \int_0^1 e^{A_s(1 - \tau)} P_q(\tau) \dif \tau &= e^{A_s}V_1(A_s) =  e^{A_s} - r_q(A_s) \\
&= \frac{(-1)^q}{(2q)!} A_s^{2q + 1} D_q^{-1}(A_s) \int_0^1 e^{A_s \tau} (1 - \tau)^q\tau^q \dif \tau,
\end{align*}
where the last equality is the Pad\'e remainder \cite{Higham2008}.
By Rodrigues' formula the term $P_q(\tau)$ can be replaced by $\frac{1}{q!} \frac{\dif^{\,q}}{\dif \tau^q}  (\tau^2-\tau)^q$.
Repeated integration by parts and reversing the integration interval thus shows that the left-hand side is given by
\begin{equation*}
\begin{split}
A_s C_q \int_0^1 e^{A_s(1 - \tau)}\frac{1}{q!} \frac{\dif^{\,q} }{\dif \tau^q}  (\tau^2 - \tau)^q \dif \tau
&= A_s C_q \frac{A_s^q}{q!} \int_0^1 e^{A_s(1 -\tau)} (\tau^2 - \tau)^q \dif \tau \\
&=  A_s C_q \frac{A_s^q}{q!} \int_0^1 e^{A_s\tau} \tau^q(1 - \tau)^q \dif \tau.
\end{split}
\end{equation*}
Matching terms with the  Pad\'e remainder then gives the result.
\end{proof}
Proposition \ref{prop:explicit-V} ensures that $V_t$ is analytic in $A_s$ in the neighbourhood $\norm{A_s} < \nu_q$.
Consequently, it has an absolutely convergent power series expansions,
which is more conveniently expressed using the following auxiliary function
\begin{equation*}
\psi(t, \eta) = \frac{q!}{(2q)!} (-\eta)^{q+1} D_q^{-1}(\eta) e^{-\eta t},
\end{equation*}
so that
\begin{equation*}
V_t(A_s) = \int_0^t \psi(\tau, A_s) P_q(\tau) \dif \tau.
\end{equation*}
The function $\psi$ is analytic in the same region as $V_t$ is. Therefore,
\begin{equation*}
\psi(t, \eta) = \sum_{n = q+1}^\infty \psi_n(t) \eta^n,
\end{equation*}
and a bound on the norm of $V_t$ is obtained by
\begin{subequations}
\begin{align}
\bar{V}_n(t) &= \abs[2]{\int_0^t \psi_n(\tau) P_q(\tau) \dif \tau}, \\
\bar{V}(t, \eta) &= \sum_{n =  q + 1}^\infty \bar{V}_n(t) \eta^n, \label{eq:v-bound}\\
\beta(\eta)         &= \sup_{t \in [0, 1]}  \bar{V}(t, \eta), \\
\fnorm{V_t(A_s)} &\leq  \beta(\norm{A_s}).
\end{align}
\end{subequations}
It is clear that $t \mapsto \bar{V}(t, \norm{A_s})$ has extrema at the zeros $z_1, z_2, \ldots, z_q$ of the Legendre polynomial $P_q$.
However, it is not clear that these are the only extrema, even though numerical experiments certainly suggest that this is the case.
In any case, define $z_0 = 0$ and $z_{q+1} = 1$ and form grids constructed by uniformly placing $p$ points in the intervals
$(z_0, z_1), \ldots, (z_q, z_{q+1})$, totalling $(q+1)(p - 1)$ unique points.
Let $t_1, \ldots, t_{(q+1)(p-1)}$ be the union of these grids, and approximate $\beta$ by
\begin{equation*}
\beta(\eta) \approx \hat{\beta}(\eta) = \max_{i}  \bar{V}(t_i, \eta).
\end{equation*}
Maximal positive numbers $\eta_q$ for $q = 1, \ldots, 21$ such that $\hat{\beta}(\eta) \leq u$ when $u = 2^{-53}$ are computed, for $p = 2, 2^6, 2^7$, in the Julia programming language \cite{Bezanson2017},
by using arbitrary precision arithmetic,
truncating the sum \eqref{eq:v-bound} at the 150th order term, and
computing the coefficients $\bar{V}_n$ by Taylor mode automatic differentiation \cite{TaylorSeries.jl-2019}.
The results are tabulated in Table \ref{tab:useful-table} along with the quantities $\theta_q$, $\nu_q$, and $\xi_q$ obtained by \cite{Higham2005}.
As the maximal $\eta_q$ were the same for all selected $p$ only the result for $p = 2$ is presented.
Additionally, the series $\eta_q$ and $\theta_q$ for $q =1, \ldots, 21$ are drawn in Figure~\ref{fig:norm-bounds}.
It is evident that ensuring that the Gramian is computed to double precision round-off accuracy implies a more aggressive scaling of $A$ than for the matrix exponential,
particularly for $2\leq q \leq 11$, while for $q = 1$ and $q \geq 12$ only 2 additional downscalings are required.

The result of the above discussion is summarized in the following theorem:
\begin{theorem}\label{thm:backward_error}
For $q = 1, \ldots, 21$, let $s$ be chosen such that $\norm{A_s} = \norm{A} / 2^s \le \eta_{q}$ according to Table~\ref{tab:useful-table}.
Assume that the coefficients $C_k$ are given by~\eqref{eq:Ck_Moore} and that $\widehat{\Phi}_0(A)$ and $\widehat{G}_0(A,B)$ are given by~\eqref{eq:initial-approximation}.
If $\widehat{\Phi}_k(A)$ and $\widehat{G}_k(A, B)$ are computed by applying the recursion formula~\eqref{eq:exp-and-gram-doubling} starting from $\widehat{\Phi}_0(A)$ and $\widehat{G}_0(A,B)$ then
\begin{equation*}
\widehat{G}_s(A, B) = \mathcal{G}\big(At + \Delta A(t),B + \Delta B(t)\big)
\end{equation*}
with specific perturbations $\Delta A$, $\Delta B$ for which
\begin{equation*}
\max\Bigg(\frac{\fnorm{\Delta A(t)}  }{\fnorm{At}},  \frac{\fnorm{\Delta B(t)}  }{\fnorm{B}}\Bigg) \leq u.
\end{equation*}
\end{theorem}

\begin{table}[h]
\caption{
Maximal values $\theta_q$ and $\eta_q$ of $\norm{2^{-s}A}$ such that $2^s \norm{F_1(A_s)} \leq \norm{A}u$ and
$\protect\fnorm{V_t(A_s)} \leq u$, $\nu_q = \min\{ \abs{z} \colon D_q(z) = 0\}$,
and upper bound $\xi_q$ for $\norm[0]{D_q^{-1}(A)}$. Here, $u = 2^{-53}$.
} \label{tab:useful-table}
\centering
\begin{tabular}{c|ccccccc}
$q$        & 1      & 2      & 3      & 4      & 5      & 6      & 7       \\ \hline
$\theta_q$ & 3.7e-8 & 5.3e-4 & 1.5e-2 & 8.5e-2 & 2.5e-1 & 5.4e-1 & 9.5e-1  \\
$\eta_q$   & 1.8e-8 & 2.4e-5 & 6.7e-4 & 5.3e-3 & 2.1e-2 & 6.0e-2 & 1.3e-1  \\
$\nu_q$    & 2.0e0  & 3.5e0  & 4.6e0  & 6.0e0  & 7.3e0  & 8.7e0  & 9.9e0   \\
$\xi_q$    & 1.0e0  & 1.0e0  & 1.0e0  & 1.0e0  & 1.1e0  & 1.3e0  & 1.6e0   \\
\multicolumn{7}{c}{} \\
$q$        & 8      & 9      & 10     & 11     & 12    & 13    & 14    \\ \hline
$\theta_q$ & 1.5e0  & 2.1e0  & 2.8e0  & 3.6e0  & 4.5e0 & 5.4e0 & 6.3e0 \\
$\eta_q$   & 2.4e-1 & 4.1e-1 & 6.2e-1 & 8.9e-1 & 1.2e0 & 1.5e0 & 1.9e0 \\
$\nu_q$    & 1.1e1  & 1.3e1  & 1.4e1  & 1.5e1  & 1.7e1 & 1.8e1 & 1.9e1 \\
$\xi_q$    & 2.1e0  & 3.0e0  & 4.3e0  & 6.6e0  & 1.0e1 & 1.7e1 & 3.0e1 \\
\multicolumn{7}{c}{} \\
$q$        & 15    & 16    & 17    & 18    & 19    & 20    & 21    \\ \hline
$\theta_q$ & 7.3e0 & 8.4e0 & 9.4e0 & 1.1e1 & 1.2e1 & 1.3e1 & 1.4e1 \\
$\eta_q$   & 2.4e0 & 2.9e0 & 3.4e0 & 4.0e0 & 4.6e0 & 5.2e0 & 5.8e0 \\
$\nu_q$    & 2.1e1 & 2.2e1 & 2.3e1 & 2.5e1 & 2.6e1 & 2.7e1 & 2.8e1 \\
$\xi_q$    & 5.3e1 & 9.8e1 & 1.9e2 & 3.8e2 & 8.3e2 & 2.0e3 & 6.2e3 \\
\end{tabular}
\end{table}

\begin{figure}[h]
\centering
\includegraphics[width=\columnwidth]{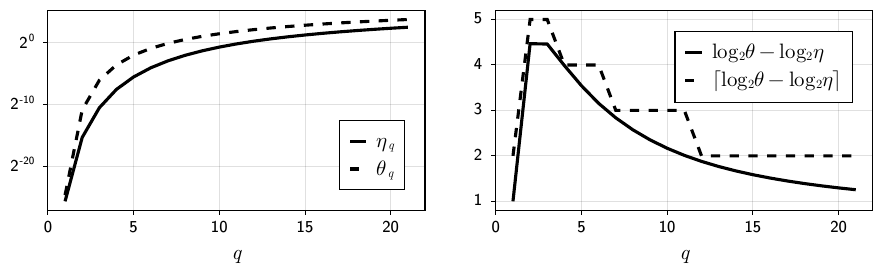}
\caption{The series $\theta_q$ and $\eta_q$ for $q=1, \ldots, 21$ (left), and the difference in the scaling parameter $s$ when selected to
satisfy $\norm{A_s} \leq \theta_q$ or $\norm{A_s} \leq \eta_q$, respectively (right).} \label{fig:norm-bounds}
\end{figure}

\section{Forward error analysis}\label{sec:forward-error-analysis}
According to the backward error analysis, the method exactly solves a problem which is as close to the desired problem as it can be in double precision arithmetic.
Nevertheless, the forward error $\widehat{G}(A, B) - G(A, B)$ can still be large if the problem is ill-conditioned.
The rule of thumb is that the forward error is the backward error multiplied with the condition number of the problem, see e.g.~\cite[Section 1.6]{Higham1996}.
The aim of this section is to obtain estimates on the relative condition number in the spectral norm and the induced supremum norm of $\mathcal{G}$ when evaluated at arguments of the form $(At, B)$.

Before proceeding, it is important to note that the backward error in $(At, B)$ is not unique and that Proposition~\ref{prop:gramian-backward-error} only provides an example of a backward error.
Another example is given as follows.
\begin{lemma}
Let $\fnorm{V_t(A_s)} < 1$, then
\begin{equation*}
\mathcal{G}(At + \Delta A(t), B + \Delta B(t)) = \mathcal{G}(At + \Delta \widetilde{A}(t), B),
\end{equation*}
where the alternative backward error $\Delta \widetilde{A}$ is given by
\begin{equation*}
\Delta \widetilde{A}(t) =  k F_1(A_s) + F_{2^st - k}(A_s) = k F_1(A_s) + \log (\mathrm{I} + V_{2^st - k}(A_s))
\end{equation*}
for $t \in [k2^{-s}, (k+1)(2^{-s})$ and $k = 0, \ldots, 2^s - 1$.
Furthermore, the norm of $\Delta \widetilde{A}$ is bounded by
\begin{equation*}
\fnorm{\Delta \widetilde{A}} \leq 2^s \norm{F_1(A_s)}  - \log(1 - \fnorm{V_t(A_s)}).
\end{equation*}
In particular, when $\norm{2^{-s}A} \leq \eta_q$ the alternative backward error is bounded by
\begin{equation*}
\fnorm{\Delta \widetilde{A}} \leq \norm{A}u   - \log(1 - u).
\end{equation*}
\end{lemma}

\begin{proof}
The first part is immediately verified by
\begin{equation*}
\begin{split}
e^{A t + \Delta A(t)}(B + \Delta B(t)) &= e^{A t + \Delta A(t)}( B + V_{2^st - k}(A_s)B    ) = e^{A t + \Delta A(t) + F_{2^st - k}(A_s)}B \\
&= e^{A t + \Delta \widetilde{A}(t)}B.
\end{split}
\end{equation*}
In order to establish the bound, start with the triangle inequality to obtain
\begin{equation*}
\fnorm{\widetilde{A}} \leq \fnorm{\Delta A(t)} + \fnorm{ \log (\mathrm{I} + V_{2^st - k}(A_s))}.
\end{equation*}
Proposition \ref{prop:gramian-backward-error} ensures that the first term is bounded by $2^s \norm{F_1(A_s)}$.
The second term can be controlled by the following inequality \cite{Higham2005}
\begin{equation*}
\norm{\log (\mathrm{I} + V_{2^st - k}(A_s))} \leq - \log(1 - \norm{V_{2^st - k}(A_s)}).
\end{equation*}
Consequently, when $\fnorm{V_t(A_s)} < 1$ the second term is bounded by
\begin{equation*}
\begin{split}
\fnorm{ \log (\mathrm{I} + V_{2^st - k}(A_s))}  &= \sup_{t \in [0, 1]} \norm{\log (\mathrm{I} + V_t(A_s))}
\leq \sup_{t \in [0, 1]} - \log(1 - \norm{V_t(A_s)}) \\
&\leq - \log(1 - \fnorm{V_t(A_s)}).
\end{split}
\end{equation*}
Finally, the norm bound when $\norm{2^{-s}A} \leq \eta_q$ follows from Theorem \ref{thm:backward_error}.
\end{proof}

The benefit of this alternative choice of backward error representation is two-fold. Firstly, the condition number estimation becomes easier to conduct since only one input argument is perturbed.
Secondly, as will be demonstrated, the relative forward error estimate will be independent of $B$.
Recall that the absolute condition number with respect to the first argument is defined by \cite[Section 3.1]{Higham2008}
\begin{equation*}
\begin{split}
\kappa_{\mathsf{abs}} &= \lim_{\varepsilon \to 0} \sup_{ \fnormtwo{\Delta A} \leq \varepsilon} \varepsilon^{-1} \normtwo{\mathcal{G}(At + \Delta A, B) - \mathcal{G}(At, B)}
= \sup_{ \fnormtwo{\Delta A} \leq 1} \normtwo{L_1(At, B, \Delta A)},
\end{split}
\end{equation*}
where $L_1$ is the Fr\'echet derivative of $\mathcal{G}$ with respect to its first input.
It is given by
\begin{equation*}
L_1(At, B, \Delta A) = \int_0^1 L_0(A\tau, \Delta A(\tau)) B B^* e^{A^*\tau} \dif \tau  + \int_0^1 e^{A\tau} B B^*  L_0^*(A\tau, \Delta A(\tau))  \dif \tau,
\end{equation*}
where $L_0$ is the Fr\'echet derivative of matrix exponential, that is \cite[Section 10.2]{Higham2008}:
\begin{equation*}
L_0(M, \Delta M) = \int_0^1 e^{M(1-s)} \Delta M e^{M s} \dif s.
\end{equation*}
From this the absolute condition number is rather immediate.
\begin{proposition}
The absolute condition number of $\mathcal{G}$ with respect to the first argument is given by
\begin{equation*}
\kappa_{\mathsf{abs}} = 2\normtwo{G(A, B)}
\end{equation*}
\end{proposition}

\begin{proof}
Since $L_1(At, B, \Delta A)$ is Hermitian,
\begin{equation*}
\normtwo{L_1(At, B, \Delta A)} =  \sup_{\normtwo{w} = 1} \abs{ w^* L_1(At, B, \Delta A) w}.
\end{equation*}
Furthermore,
\begin{equation*}
w^* L_1(At, B, \Delta A) w = 2 \int_0^1 w^* L_0(A\tau, \Delta A(\tau)) B B^* e^{A^*\tau} w \dif \tau
\end{equation*}
and thus by Cauchy--Schwarz the worst perturbation satisfies
\begin{equation*}
B^* L_0^*(A\tau, \Delta A(\tau)) w \propto B^* e^{A^* \tau } w,
\end{equation*}
which is obtained when $\Delta A(\tau) = \mathrm{I}$ for $\tau \in [0, 1]$. Hence
\begin{align*}
  \sup_{\normtwo{\Delta  A}  \leq 1} \normtwo{L_1(At, B, \Delta A)} &=  \sup_{\normtwo{w} = 1}  \abs{ w^* L_1(At, B, \mathrm{I}) w} = 2  \sup_{\normtwo{w} = 1}  \abs{ w^* G(A, B) w} \\
  &= 2\normtwo{G(A, B)}.
\end{align*}
\end{proof}

With the previous results in mind, if $2^s\norm{A} \leq \eta_q$ then the absolute and relative forward errors defined by
\begin{equation*}
   e_{\mathsf{abs}} = \mathcal{G}(At + \Delta \widetilde{A}(t), B) - \mathcal{G}(At, B)  \quad \text{and} \quad e_{\mathsf{rel}} = e_{\mathsf{abs}} / G(A,B)
\end{equation*}
are estimated by
\begin{subequations}
  \begin{align}
    e_{\mathsf{abs}} &\lessapprox 2\normtwo{G(A, B)} \big(  \norm{A}u   - \log(1 - u) \big)  \approx 2 u\normtwo{G(A, B)} \big(1 + \norm{A} \big), \label{eq:eabs}\\
    e_{\mathsf{rel}} &\lessapprox 2 \big(  \norm{A}u   - \log(1 - u) \big) \approx 2 u  \big(1 + \norm{A} \big).  \label{eq:erel}
\end{align}
\end{subequations}
The approximate inequalities can be turned into true inequalities by adding a term $o(\fnormtwo{\widetilde{A}})$ to the respective right-hand-side, cf.~\cite[Inequality (3.3)]{Higham2008}.

\section{Controllable subspace of the approximated Gramian}\label{sec:rank}
The discussion has hitherto been centred on controlling the error.
However, another important aspect is the rank properties of the Gramian.
The pair $(A, B)$ is said to be completely controllable if the controllability matrix
\begin{equation*}
\mathcal{C}(A, B) =
\begin{bmatrix}
B & A B & \cdots & A^{n-1} B
\end{bmatrix},
\end{equation*}
is of full rank, which is equivalent to the Gramian being of full rank \cite{Anderson2007}.
In fact, $\mathcal{C}(A, B)$ and $G(A, B)$ have the same range, the so-called controllable subspace.
From the perspective of applications in control and estimation it thus interesting to investigate
whether the method produces an approximation, $\widehat{G}$, that mathematically reproduces the controllability properties of $(A, B)$.
There is also a practical interest in such an investigation.
Namely, even if $G$ is computed up to unit round-off in the forward sense,
this need not be the case for the Cholesky factor due to the squaring $U^* U$.
Consider for instance $U_0 \in \mathbb{R}^{(n-1)\times n}$ and a unit vector $e$, and define
\begin{equation*}
U_1 = \begin{bmatrix} U_0 \\ 0^* \end{bmatrix}, \quad U_2 = \begin{bmatrix} U_0 \\ 2^{-27} e^* \end{bmatrix}.
\end{equation*}
Then $U_2^* U_2 - U_1^* U_1 = 2^{-54} e e^*$, which is a negligible difference in double precision.
However, the difference in the entries of $U_1$ and $U_2$ is not.
Therefore, taking controllability into account is a hedge against unnecessary loss of numerical rank in the approximated Cholesky factor.

It follows from the variation of constants representation \eqref{eq:gramian-approximation} and \eqref{eq:initial-cholesky},
that the approximated Gramian may be written as $\widehat{G} =   \widehat{\mathcal{C}}_{q, s} \widehat{\mathcal{C}}_{q, s}^*$ with
\begin{align*}
\tilde{U}_0^* &= D_q^{-1}(A_s)
\begin{bmatrix}
L_0(A_s) B & \cdots & L_k(A_s) B / \sqrt{2k + 1} & \cdots & L_q(A_s) B / \sqrt{2q + 1}
\end{bmatrix},\\
\widehat{\mathcal{C}}_{q,s} &=
\begin{bmatrix}
r_q(A_s)\tilde{U}_0^* & r_q^2(A_s) \tilde{U}_0^* & \cdots & r_q^{2^s-1}(A_s)\tilde{U}_0^*
\end{bmatrix}.
\end{align*}
The approximated Gramian resembles the outer product of the controllability matrix with itself,
except for the fact that the matrix is formed with a basis different from the monomial one.
More specifically, for $k = 0,1, \ldots, q$ and $m = 0, 1, \ldots, 2^s - 1$,
define the functions
\begin{equation*}
e_{k, m}(z) = \frac{1}{ \sqrt{(2k + 1)2^s} } r_q^m(z2^{-s}) D_q^{-1}(z s^{-s}) L_k(z s^{-s}).
\end{equation*}
Then $\widehat{\mathcal{C}}_{q, s}$ is a block matrix consisting of the following blocks:
\begin{equation*}
e_{k, m}(A)B, \quad k = 0,1, \ldots, q, \quad m = 0, 1, \ldots, 2^s - 1.
\end{equation*}
The polynomials $L_k$ are of degree at most $q$, from which it follows that the functions
$\tilde{e}_{k,m}(z) = D_q^{2^s}(z2^{-s}) e_{k,m}(z)$ are polynomials of at most degree $2^s q$.
If there is an $s$ such that they span the space of polynomials of degree at most $n-1$, then
there is an invertible matrix $T$ such that
\begin{equation}\label{eq:controllable_subspace_requirement}
D_q^{2^s}(A_s) \widehat{\mathcal{C}}_{q, s} T = \begin{bmatrix} \mathcal{C}(A, B) & 0 \end{bmatrix}.
\end{equation}
This implies that the controllable subspace of the approximated Gramian is the same as that of the exact Gramian. To make this statement precise, the following lemma is useful:

\begin{lemma}\label{lem:invariant_subspace}
Let $A$ be a square $n\times n$ matrix, $f$ a function that is analytic on a region containing the spectrum of $A$, and $V$ an $A$-invariant subspace,
then $f(A)V \subset V$. Furthermore, if $f(A)$ is invertible then $f(A)V = V$.
\end{lemma}

\begin{proof}
Let $f_k$ be the Taylor series expansion of $f$ around the origin, truncated at the $k$th term, $\bar{v}$ any vector in $V$, and define
\begin{equation*}
v_k = f_k(A) \bar{v} = \sum_{m=0}^k c_k A^k \bar{v}.
\end{equation*}
Clearly, $v_0 = c_0 \bar{v} \in V$. Assume $v_{k-1} \in V$, then
\begin{equation*}
v_k = v_{k-1} + c_k A^k \bar{v},
\end{equation*}
and $c_k A^k \bar{v} \in V$ due to $A$-invariance of $V$.
Since $V$ is a subspace, it is closed under linear combinations. Thus $v_k \in V$, and by induction this holds for any $k \geq 0$.
Furthermore, since $v_k$ converges and $V$ is closed, $v_\infty = f(A) \bar{v}$ exists and lies in $V$.
Lastly, as established $f(A)V \subset V$, but if $f(A)$ is invertible it defines a bijection between $V$ and $f(A)V$ so in fact $V = f(A) V$.
\end{proof}
Since the range of $\mathcal{C}(A, B)$ is $A$-invariant and $D^{-2^s}$ analytic in a region containing the spectrum of $A$, Lemma~\ref{lem:invariant_subspace} implies that
if there is an invertible $T$ such that \eqref{eq:controllable_subspace_requirement} holds, then
\begin{equation*}
\operatorname{range} \widehat{\mathcal{C}}_{q, s} = \operatorname{range} \widehat{\mathcal{C}}_{q, s} T = \operatorname{range}  D_q^{-2^s}(A_s) \mathcal{C}(A, B) = \operatorname{range} \mathcal{C}(A, B).
\end{equation*}
Consequently, it remains to establish the existence of such a $T$ to ensure that the approximated Gramian mathematically reproduces the controllable subspace of the exact Gramian. As noted above, this is true if the polynomials $\tilde{e}_{k,m}(z)$, $m = 0, \ldots, 2^s-1$, span the space of polynomials of degree $n-1$. This is essentially a condition on $s$, and the following assumption is required in order to find the smallest such $s$.

%
\begin{assumption}\label{ass:legendre_coefficients}
The polynomials $L_0, L_1, \ldots, L_q$ are linearly independent.
\end{assumption}
The table of coefficients in Appendix \ref{sec:coefficient_tables} certainly verifies this assumption for $q = 3, 5, 7, 9, 13$.
Furthermore, the following result on the zeros of $N_q$ and $D_q$ shall prove useful.
\begin{lemma}\label{lem:pade-zeros-poles}
The polynomials $N_q$ and $D_q$ have no zeros in common.
\end{lemma}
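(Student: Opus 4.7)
The plan is to argue by contradiction. Suppose $z_0 \in \mathbb{C}$ is a common zero of $N_q$ and $D_q$. Since $D_q(0) = 1$, necessarily $z_0 \neq 0$, so both polynomials may be divided by $(z - z_0)$, producing $\tilde{N}(z) = N_q(z)/(z - z_0)$ and $\tilde{D}(z) = D_q(z)/(z - z_0)$ of degree at most $q - 1$, with $r_q = \tilde{N}/\tilde{D}$.

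The first key step is to divide the Pad\'e remainder identity already used in the proof of Proposition~\ref{prop:explicit-V},
\begin{equation*}
e^z D_q(z) - N_q(z) = \frac{(-1)^q}{(2q)!}\, z^{2q + 1} \int_0^1 e^{zt}\, t^q (1 - t)^q \dif t,
\end{equation*}
through by $(z - z_0)$. The left-hand side becomes $e^z \tilde{D}(z) - \tilde{N}(z)$, which is entire, so the resulting right-hand side must be entire as well; its Taylor expansion at the origin therefore has no term of degree below $2q + 1$. Thus $\tilde{N}/\tilde{D}$ is a rational function with numerator and denominator of degree at most $q - 1$ that satisfies the Pad\'e conditions for $e^z$ to accuracy $O(z^{2q + 1})$.

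The contradiction is then obtained by comparing with the ordinary $[q-1/q-1]$ diagonal Pad\'e approximant $\hat{N}_{q-1}/\hat{D}_{q-1}$ of $e^z$, constructed by the same recipe as $N_q/D_q$ at order $q - 1$, whose remainder
\begin{equation*}
e^z \hat{D}_{q-1}(z) - \hat{N}_{q-1}(z) = \frac{(-1)^{q-1}}{(2q - 2)!}\, z^{2q - 1} \int_0^1 e^{zt}\, t^{q-1}(1 - t)^{q-1} \dif t
\end{equation*}
vanishes to order \emph{exactly} $2q - 1$, since its leading Taylor coefficient $\frac{(-1)^{q-1}((q-1)!)^2}{(2q-2)!(2q-1)!}$---computable via the Beta identity $B(q,q) = ((q-1)!)^2/(2q-1)!$---is nonzero. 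The elementary uniqueness of Pad\'e approximants at fixed degree then applies: using the algebraic identity $\tilde{N}\hat{D}_{q-1} - \hat{N}_{q-1}\tilde{D} = \hat{D}_{q-1}(\tilde{N} - e^z\tilde{D}) + \tilde{D}(e^z\hat{D}_{q-1} - \hat{N}_{q-1})$, the left-hand side is a polynomial of degree at most $2q - 2$ vanishing at the origin to order at least $2q - 1$, hence it is identically zero, and so $\tilde{N}/\tilde{D} = \hat{N}_{q-1}/\hat{D}_{q-1}$ as rational functions. This forces $e^z - \hat{N}_{q-1}/\hat{D}_{q-1}$ to vanish at $z = 0$ both to order exactly $2q - 1$ and to order at least $2q + 1$, the desired contradiction. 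The main subtlety lies in this last uniqueness/order-matching step, where one must be careful that the two representations $(\tilde{N},\tilde{D})$ and $(\hat{N}_{q-1},\hat{D}_{q-1})$ are equal only up to an overall scalar multiple; no step requires heavy machinery beyond the explicit Pad\'e remainder formula.
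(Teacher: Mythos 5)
Your proof is correct, but it takes a genuinely different route from the paper's. The paper disposes of the lemma in two lines by citing Ehle's theorem that all zeros of $N_q$ lie in the open left half-plane and then invoking the symmetry $D_q(z) = N_q(-z)$ to place all zeros of $D_q$ in the open right half-plane, so the two zero sets cannot intersect. You instead give a self-contained coprimality argument: a common zero $z_0 \neq 0$ lets you cancel $(z - z_0)$ from the remainder identity $e^z D_q(z) - N_q(z) = \tfrac{(-1)^q}{(2q)!}\, z^{2q+1}\int_0^1 e^{zt}\, t^q(1-t)^q \dif t$ (the quotient is entire precisely because the remainder itself vanishes at the common zero $z_0$ --- a point worth stating explicitly), producing a rational function with numerator and denominator of degree at most $q-1$ that matches $e^z$ to order $2q+1$; the standard cross-multiplication uniqueness argument then identifies it with the $[q-1/q-1]$ approximant, whose defect is exactly $2q-1$ since $B(q,q) = ((q-1)!)^2/(2q-1)! \neq 0$, giving the contradiction. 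In effect you prove that the diagonal Pad\'e table of $e^z$ is normal at this entry. Your approach buys self-containedness --- it relies only on the remainder formula already used in the proof of Proposition~\ref{prop:explicit-V} and needs no external reference --- while the paper's approach buys brevity and strictly stronger information (the actual half-plane location of the zeros), which is classical but not needed for this lemma. Your closing caveat about the pair $(\tilde{N},\tilde{D})$ being determined only up to a scalar is handled correctly, since the final order-of-vanishing comparison uses only the equality of the rational functions, both denominators being nonzero at the origin.
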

\begin{proof}
Ehle~\cite[Theorem 2.1, p. 22]{Ehle1969} states that all the zeros of $N_q$ are in the open left half plane.
Therefore, by the well known relation, $D_q(z) = N_q(-z)$, all zeros of $D_q$ are in the open right half plane,
which gives the desired conclusion.
\end{proof}
It remains to study the span of the union of the following sets:
\begin{equation*}
\Pi^m =
\Big\{\tilde{e}_{0, m}(z), \tilde{e}_{1, m}(z), \ldots, \tilde{e}_{q, m}(z) \Big\}, \quad m = 0, 1, \ldots, 2^s-1.
\end{equation*}
\begin{lemma}\label{lem:pade-spaces}
Let Assumption~\ref{ass:legendre_coefficients} hold. Then $\Pi^m$ are sets of linearly independent functions for
$m = 0, 1, \ldots, 2^s-1$.
\end{lemma}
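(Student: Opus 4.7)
The plan is to reduce the linear independence of $\Pi^m$ to Assumption~\ref{ass:legendre_coefficients} on $\{L_0,\ldots,L_q\}$ by pulling a $k$-independent polynomial factor out of each $\tilde e_{k,m}$, so that the only $k$-dependence is carried by $L_k$ itself.

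First, using $r_q = N_q/D_q$, I would rewrite
\[
\tilde e_{k,m}(z) \;=\; \frac{1}{\sqrt{(2k+1)2^s}}\, D_q^{\,2^s - m - 1}(z 2^{-s})\, N_q^{\,m}(z 2^{-s})\, L_k(z 2^{-s}).
\]
Since $m$ ranges over $0,1,\ldots,2^s-1$, the exponent $2^s-m-1$ is non-negative, so the factor $D_q^{\,2^s-m-1}(z 2^{-s}) N_q^{\,m}(z 2^{-s})$ is a genuine polynomial, not a rational function. Verifying this — making sure the $D_q^{-1}$ hidden inside $e_{k,m}$ is fully absorbed by the $D_q^{2^s}$ prefactor — is essentially the only bookkeeping step and is what I expect to be the main, if mild, obstacle.

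Next, suppose $\sum_{k=0}^q \alpha_k \tilde e_{k,m}(z) \equiv 0$. Factoring out $P(z) := D_q^{\,2^s-m-1}(z 2^{-s}) N_q^{\,m}(z 2^{-s})$ yields
\[
P(z) \sum_{k=0}^q \frac{\alpha_k}{\sqrt{(2k+1)2^s}}\, L_k(z 2^{-s}) \;=\; 0.
\]
Because $N_q$ and $D_q$ are nonzero polynomials, $P$ is a nonzero polynomial, and $\mathbb{R}[z]$ is an integral domain; hence the second factor must vanish identically in $w = z 2^{-s}$. Assumption~\ref{ass:legendre_coefficients} then forces $\alpha_k = 0$ for every $k$, which establishes the claim.

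I note that Lemma~\ref{lem:pade-zeros-poles} on the disjoint zero sets of $N_q$ and $D_q$ is \emph{not} needed here, since the argument only requires that $P$ be a nonzero polynomial. That lemma should instead come into play when one subsequently compares elements of $\Pi^m$ with those of $\Pi^{m'}$ for $m \neq m'$, where denominators and numerators from different powers of $r_q$ must be prevented from conspiring to cancel.
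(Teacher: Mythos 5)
Your proof is correct and follows essentially the same route as the paper's: both cancel the common factor $r_q^m\,D_q^{2^s-1}$ (equivalently your polynomial $N_q^m D_q^{2^s-m-1}$) from the dependence relation and then invoke Assumption~\ref{ass:legendre_coefficients} on the $L_k$. Your version merely makes explicit what the paper's ``$\iff$'' leaves implicit — that the cancelled factor is a nonzero polynomial, so division is legitimate — and your remark that Lemma~\ref{lem:pade-zeros-poles} is only needed later, when comparing $\Pi^m$ with $\Pi^{m'}$, matches how the paper actually uses it.
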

\begin{proof}
Let $v_k$ be some coefficients for the expansion of the zero function in the set $\Pi^m$,
that is
\begin{equation*}
0 = \sum_{k = 0}^q v_k r_q^m(z) L_k(z) D_q^{2^s-1}(z)  \iff 0 = \sum_{k = 0}^q v_k L_k(z),
\end{equation*}
which by assumption is equivalent to $v_k = 0$ for $k = 0, 1, \ldots, q$.
\end{proof}

\begin{proposition}\label{prop:controllability-properties}
For the sets $\Pi^m, \quad m = 0, 1, \ldots, 2^s-1$, the following holds:
\begin{equation*}
\operatorname{dim} \operatorname{span} \cup_{m=0}^{2^s-1} \Pi^m = q 2^s + 1.
\end{equation*}
\end{proposition}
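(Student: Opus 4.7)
The plan is first to rewrite the $\tilde e_{k,m}$ in a form amenable to classical polynomial algebra. Setting $M = 2^s$ and $w = z\,2^{-s}$, substitution of $r_q = N_q/D_q$ gives
\[
\tilde e_{k,m}(z) \;=\; \frac{1}{\sqrt{(2k+1)M}}\, N_q^m(w)\, D_q^{M-1-m}(w)\, L_k(w),
\]
and since the affine change of variable $z\mapsto w$ is a bijection preserving polynomial degrees, it preserves all dimension counts. By Assumption~\ref{ass:legendre_coefficients} the polynomials $L_0,\ldots,L_q$ are linearly independent in the $(q+1)$-dimensional space $\mathcal{P}_q$ of polynomials of degree at most $q$, so they in fact span $\mathcal{P}_q$. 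Hence $\operatorname{span}\Pi^m = N_q^m D_q^{M-1-m}\mathcal{P}_q$, and the statement reduces to showing that
\[
W_M \,:=\, \sum_{m=0}^{M-1} N_q^m\, D_q^{M-1-m}\,\mathcal{P}_q
\]
has dimension $qM+1$.

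I would proceed by induction on $M$, with base case $W_1 = \mathcal{P}_q$ of dimension $q+1$. For the inductive step, splitting off the $m = M-1$ summand and factoring $D_q$ out of the remaining ones gives the recursion $W_M = D_q\cdot W_{M-1} + N_q^{M-1}\mathcal{P}_q$, so the dimension jump over $W_{M-1}$ is $q+1$ minus the dimension of the intersection $I := D_q\cdot W_{M-1} \cap N_q^{M-1}\mathcal{P}_q$. The key step, and the place where the Padé structure enters, is showing $\dim I = 1$: any $f\in I$ can be written $f = D_q w = N_q^{M-1} p$ with $w \in W_{M-1}$ and $p\in\mathcal{P}_q$, so $D_q$ divides $N_q^{M-1}p$; the coprimality of $N_q$ and $D_q$ from Lemma~\ref{lem:pade-zeros-poles} forces $D_q \mid p$, and the degree bound $\deg p \le q = \deg D_q$ then gives $p = c\, D_q$ for a scalar $c$. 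Conversely, $N_q^{M-1} \in W_{M-1}$ (take the $m=M-2$ summand with coefficient $N_q \in \mathcal{P}_q$), so $N_q^{M-1} D_q$ does lie in $D_q W_{M-1}$ and $I$ is precisely one-dimensional. Consequently $\dim W_M = \dim W_{M-1} + q$, and the induction yields $\dim W_M = qM + 1 = q2^s + 1$ as required.

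The main obstacle, as I see it, is organising the decomposition cleanly so that the induction hypothesis applies to $W_{M-1}$ in exactly the same form; once that is done the intersection computation is short. The heart of the matter is the matching of degrees $\deg D_q = q$ with the range of $\mathcal{P}_q$ appearing in the span, combined with the coprimality supplied by Lemma~\ref{lem:pade-zeros-poles}. Note that had the multiplier $\mathcal{P}_q$ been replaced by $\mathcal{P}_{q-1}$ the intersection would be trivial and the span strictly smaller, while a larger multiplier would produce a higher-dimensional intersection; it is precisely the balance furnished by Assumption~\ref{ass:legendre_coefficients} that yields the exact dimension $q2^s+1$.
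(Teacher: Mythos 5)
Your proof is correct, and it takes a genuinely different---and in fact tighter---route than the paper's. The paper argues pairwise: it expands the zero function over $\Pi^m \cup \Pi^{m+l}$, reduces to $0 = D_q^l \sum_k v_k L_k + N_q^l \sum_k w_k L_k$, and uses the coprimality of $N_q$ and $D_q$ (Lemma~\ref{lem:pade-zeros-poles}) together with the degree bound $\deg L_k \le q$ to conclude that $\operatorname{span}\Pi^m$ and $\operatorname{span}\Pi^{m+l}$ intersect trivially for $l \ge 2$ and in a one\-/dimensional subspace for $l = 1$, then invokes Lemma~\ref{lem:pade-spaces} to count. You instead identify $\operatorname{span}\Pi^m$ with $N_q^m D_q^{M-1-m}\mathcal{P}_q$ (using that the $q+1$ independent $L_k$ of degree at most $q$ span $\mathcal{P}_q$) and run an induction on $M$ via the recursion $W_M = D_q W_{M-1} + N_q^{M-1}\mathcal{P}_q$, computing the intersection of the new summand with the \emph{entire} accumulated span. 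Both arguments rest on the same two ingredients---coprimality of $N_q$ and $D_q$, and Assumption~\ref{ass:legendre_coefficients}---but yours buys something real: pairwise intersection dimensions do not in general determine the dimension of a sum of more than two subspaces, so the paper's final counting step implicitly needs exactly what you prove, namely that each new subspace meets the sum of all previous ones in dimension one. Your induction supplies this directly (the divisibility argument $D_q \mid N_q^{M-1}p \Rightarrow p = cD_q$, plus the observation $N_q^{M-1} \in W_{M-1}$ showing the intersection is exactly, not at most, one\-/dimensional) and thereby closes a small gap in the paper's own argument.
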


\begin{proof}
The idea is to show that $\operatorname{span} \Pi^m$ and $\operatorname{span} \Pi^{m + l}$ for $l \geq 1$ can only intersect
for $l = 1$ and that the dimension of this intersection is $1$.
The conclusion is then obtained by use of Lemma~\ref{lem:pade-spaces}.
Expanding the zero function in the set $\Pi^m \cup \Pi^{m+l}$ gives
\begin{equation*}
0 = \sum_{k = 0}^q v_k r_q^m(z) L_k(z) D_q^{2^s-1}(z) + \sum_{k = 0}^q w_k r_q^{m+l}(z) L_k(z) D_q^{2^s-1}(z),
\end{equation*}
which is equivalent to
\begin{equation}\label{eq:zero-function}
0 = D_q^l(z) \sum_{k = 0}^q v_k L_k(z) + N_q^l(z)\sum_{k=0}^q w_k L_k(z).
\end{equation}
$D_q$ and $N_q$ are polynomials of degree $q$, which by Lemma~\ref{lem:pade-zeros-poles}
have no zeros in common.
Therefore \eqref{eq:zero-function} is impossible to satisfy for $l \geq 2$ unless $v_k = w_k = 0$ for $k = 0, 1, \ldots, q$.
For $l = 1$ the only possibility is that, for some arbitrary constant $c$,
\begin{align*}
\sum_{k = 0}^q v_k L_k(z) &= \pm c N_q(z),\\
\sum_{k = 0}^q w_k L_k(z) &= \mp c D_q(z).
\end{align*}
which concludes the proof.
\end{proof}
Proposition~\ref{prop:controllability-properties} combined with the discussion around~\eqref{eq:controllable_subspace_requirement} now immediately implies the following theorem:
\begin{theorem}\label{thm:controllable_subspace}
  The exact Gramian $G(A,B)$ and its approximation $\widehat{G}(A,B)$ have the same controllable subspaces if the number of squarings $s$ satisfies
  \begin{equation}\label{eq:full_rank_s}
s \geq \bigg\lceil \log_2 \frac{n-1}{q} \bigg\rceil.
\end{equation}
\end{theorem}

\section{Design of algorithm}\label{sec:algorithm_design}
The goal of this section is to arrive at a final design of the algorithm,
with the discussion in sections \ref{sec:backward-error-analysis} and \ref{sec:rank} in mind.
Other than achieving a backward error of at most unit round-off and an approximated Gramian that
preserves the controllability properties of $(A, B)$,
other design criteria involve minimizing the number of squarings to avoid the over-scaling phenomena,
while also staying as close as possible to the conventional algorithm for the matrix exponential.
Therefore, only the orders $q = 3, 5, 7, 9, 13$ are considered.

\paragraph{Pre-processing}
For the input matrices $A \in \mathbb{R}^{n \times n}$ and $B \in \mathbb{R}^{n \times m}$,
it is assumed that $m \leq n$.
This is not an unreasonable assumption, as otherwise $B$ would be overparametrized.
More specifically, if $m > n$, then the QR decomposition of $B^*$ is given by
\begin{equation*}
B^* = Q_{B^*} \widetilde{B}^*,
\end{equation*}
where $Q_{B^*} \in \mathbb{R}^{m \times n}$ and $\widetilde{B}^* \in \mathbb{R}^{n \times n}$.
It is evident from the definition \eqref{eq:gramian} that
\begin{equation*}
G(A, B) = G(A, \widetilde{B}).
\end{equation*}
It is therefore reasonable to assume that $m \leq n$, at least after initial pre-processing of the matrix $B$.
Furthermore, it is common in implementations of the matrix exponential to use a so-called balancing transform, i.e.\ to select an invertible matrix $T$ and compute $e^A = Te^{T^{-1}AT}T^{-1}$. A similar idea could also be applied in the Gramian case, since the equality
\begin{equation*}
  G(TAT^{-1}, TB) = T G(A, B) T^*
\end{equation*}
holds. However, the discussion of \cite[p. 496]{AlMohy2009b} recommends that balancing should not be used by default,
and this is therefore not included as a pre-processing step of the algorithm proposed here.
Shifting the eigenvalues through a transformation of the form $A \to A - aI$ is not included either, since this is not compatible with the Gramian structure.

\paragraph{Order adaptation}
In view of the conclusions of the backward error analysis, summarized in Theorem~\ref{thm:backward_error},
modifications of the order adaption in the conventional algorithm \cite{Higham2005} are required.
Namely, as $\eta_q < \theta_q$ for $1 \leq q \leq 21$,  the scaling parameter needs to be selected as
\begin{equation*}
s \geq \lceil \log_2 (\norm{A} / \eta_q) \rceil.
\end{equation*}
Furthermore, in view of Theorem~\ref{thm:controllable_subspace}, the scaling parameter also needs to satisfy the bound~\eqref{eq:full_rank_s}.
Therefore, in order to avoid the over-scaling phenomena, it appears numerically advantageous to simply select the smallest $q$
that satisfies the inequalities
\begin{align*}
\norm{A} &\leq \eta_q, \\
n        &\leq q + 1,
\end{align*}
for $q = 3, 5, 7, 9$.
If no such $q$ is found then the order $13$ method is used and the scaling parameter is selected as
\begin{equation*}
s = \Big \lceil \log_2 \max\biggl( \frac{\norm{A}}{\eta_q},  \frac{n-1}{q} \biggr) \Big\rceil.
\end{equation*}

\paragraph{Implementing the initial values}
From the table of coefficients, the polynomials $L_k$ are even for even $k$ and odd for odd $k$
for $q = 3, 5, 7, 9, 13$.
Consequently, the same evaluation strategy as used by Higham~\cite{Higham2005} may be adopted, and
the initial Cholesky factor of the Gramian may be accumulated from $B$ and $A^2 B$ for $q = 3, 5, 7, 9$.
Similarly, for $q = 13$, the initial Cholesky factor is accumulated from $B, A^2B, A^4B, A^6B$.

  \paragraph{Pseudo-code}
  A pseudo-code summary of the complete algorithm is given in Appendix~\ref{sec:app-algorithms}.

\section{Numerical experiments}\label{sec:experiments}
In this section, the numerical performance of the proposed method is examined on a series of test pairs $(A, B)$.
The backward error analysis of Proposition \ref{prop:gramian-backward-error} suggests that $B$ is of little importance and may thus be chosen arbitrarily.
Suitable collections of test matrices for $A$ shall later be defined in various ways.
Unless otherwise stated, a ground-truth is obtained by computing the Gramian via the matrix fraction decomposition \cite{Axelsson2014} in arbitrary precision \cite{Fousse2007},
and projecting the result on the set of Hermitian matrices.
The matrix exponential for the ground-truth is computed using the software package ExponentialUtilities.jl
\footnote{The default implementation of the matrix exponential in LinearAlgebra.jl precludes the use of arbitrary precision floats.}
\cite{DifferentialEquations.jl-2017} with the generic method using a Pad\'e approximation of order 13.
All relative errors are computed in the $2$-norm.
Additionally, the relative errors are compared to the approximate estimate~\eqref{eq:erel} established in Section~\ref{sec:forward-error-analysis}.
When the algorithm behaves in a backward stable and therefore forward stable manner,
the forward errors should not significantly exceed this bound.

\paragraph{Experiment 0} The matrices $A \in \mathbb{R}^{n \times n}$ and $B \in \mathbb{R}^{n \times 1}$ are defined as
\begin{equation*}
  A_{i,j} =
  \begin{cases}
    1, \text{ if } j + 1 = i, \\
    0, \text{ otherwise}
  \end{cases}
  \quad \text{and} \qquad
  B_i =
  \begin{cases}
    1, \text{ if } i = 1, \\
    0, \text{ otherwise}
  \end{cases}.
\end{equation*}
This experiment serves as a ``unit test'' in the sense that $A$ is nilpotent of index $n$ and the base approximations of order $q$ are exact for nilpotent matrices of order $q+1$.
Furthermore, as $e^{A t} B =
\begin{bmatrix}
  1, & t, & t^2/2, & \cdots, & t^{n-1} / (n-1)!
\end{bmatrix}^*
$,
the Gramian and its Cholesky factor can be computed in closed form by switching to the Legendre basis.

The relative errors are shown in Figure~\ref{fig:experiment:integrator}.
It can be seen that the proposed algorithm performs to an acceptable accuracy, just as predicted by the error analysis.
Furthermore, the error in the approximated Cholesky factor is almost half an order of magnitude smaller than that of the Gramian.
\begin{figure}
\centering
\includegraphics[width=\columnwidth]{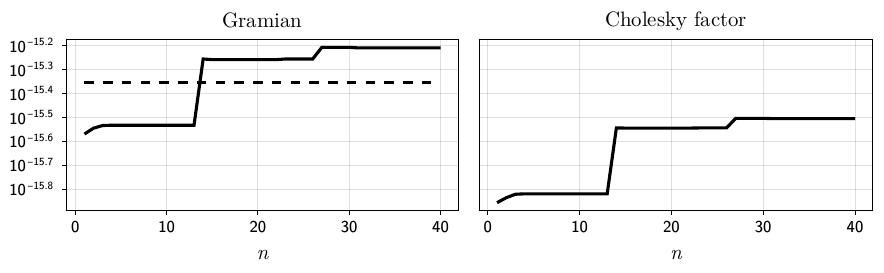}
\caption{
The results of experiment 0. The relative error in the approximated Gramians (left) and Cholesky factors (right).
The dashed line is the approximate error estimate of the computed Gramian given by~\eqref{eq:erel}.
}
\label{fig:experiment:integrator}
\end{figure}
\paragraph{Experiment 1} $A$ is selected from a collection of $10 \times 10$ matrices provided by a subset of the ``builtin'' matrices  of the software package MatrixDepot.jl \cite{Zhang2016}.
The matrices that were excluded were either sparse matrices, not conforming to the general API, or matrices with positive eigenvalues and very large norms,
for which the excessive amount of doublings lead to numerical problems.
The full list of excluded matrices is given in Appendix \ref{sec:app-experiments}.
The matrix $B$ is selected as a $10 \times m$ matrix with $m = 1,5,10$, and the elements drawn independently from the standard Normal distribution, which is then normalized in the induced 2-norm.
The experiment is conducted 50 times for each selection of $m$ so that the effect of randomization can be assessed.
Scatter plots of the relative errors over all simulations are shown in Figure~\ref{fig:experiment:builtin}.
It is again evident that the proposed algorithm performs in accordance with expectation.
One exception is the matrix numbered 22. This matrix is known as, \texttt{"invol"};
it has positive eigenvalues equal to one and a 1-norm resulting in 25 doublings, which is problematic but does not result in complete failure.
The result also demonstrates that the algorithm is rather insensitive to the selection of $B$,
except for a few outliers in the case $m = 1$.

\begin{figure}[t!]
\centering
\includegraphics[width=\columnwidth]{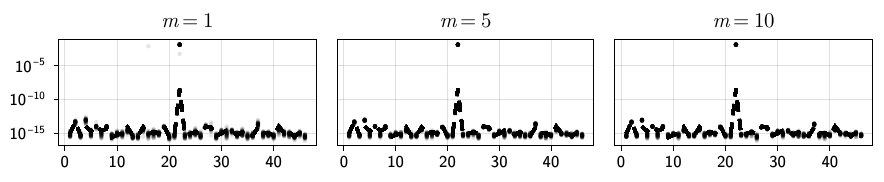}
\caption{
The results of experiment 1. Every value on the horizontal axis corresponds to a specific matrix $A$ from the builtin data set in MatrixDepot.jl.
For each $A$, there are 50 grey dots, each corresponding to the relative error in the approximated Gramian for a single randomly generated matrix $B$.
The dashed line is the approximate error estimate of the computed Gramian given by~\eqref{eq:erel}.
}
\label{fig:experiment:builtin}
\end{figure}

\paragraph{Experiment 2} The matrices $A \in \mathbb{R}^{n\times n}$ and $B \in \mathbb{R}^{n \times 1}$ are defined by
\begin{subequations}\label{eq:laguerre_network}
\begin{align}
A_{i,j} &=
\begin{cases}
- 2\lambda, \quad i > j, \\
-\lambda  , \quad i = j, \\
0, \quad i < j
\end{cases}, \\
B &= \sqrt{2\lambda} \begin{bmatrix} 1 & 1 & \cdots & 1 \end{bmatrix}^*.
\end{align}
\end{subequations}
This is a so called Laguerre network.
The parameter $\lambda$ is a positive number that is selected from $\{1.0, 2.5, 5.0\}$ and $n$ ranges from $1$ to $100$.
$A$ is Hurwitz, and it is readily verified that the identity matrix solves the algebraic Lyapunov equation associated with $(A, B)$, namely
\begin{equation*}
A + A^* = - BB^*.
\end{equation*}
Consequently, the finite horizon Gramian may be computed by the formula \cite[Lemma 1]{Farrell1993}
\begin{equation*}
G(A, B) = I - e^A e^{A^*}.
\end{equation*}
This formula is used to compute a reference solution in arbitrary precision and
the results are shown in Figure~\ref{fig:experiment:laguerre}.
Whereas there is a loss of accuracy as the dimension grows,
the resulting error appears to be acceptable up to dimension at least 100.

\begin{figure}[t!]
\centering
\includegraphics[width=\columnwidth]{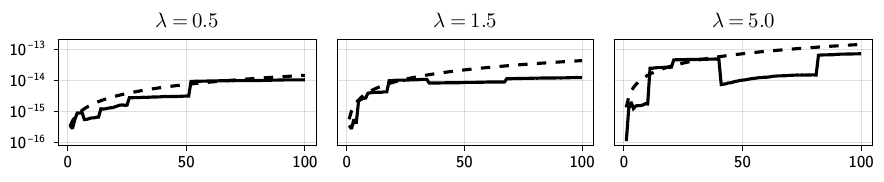}
\caption{
The results of experiment 2.
The relative error in the approximated Gramians, plotted against the dimension of the
Laguerre network \eqref{eq:laguerre_network}.
The dashed line is the approximate error estimate of the computed Gramian given by~\eqref{eq:erel}.
}
\label{fig:experiment:laguerre}
\end{figure}

\section{Conclusions}\label{sec:conclusion}
In this article, a ``scaling and squaring'' method has been developed for computing the Cholesky factor
of the finite horizon Gramian associated with the pair of matrices $(A, B)$.
The method computes the matrix exponential, $e^A$, in almost the same manner as the conventional algorithm \cite{Higham2005}.
Furthermore, a backward error analysis was carried out, which ensures that both the matrix exponential and the Gramian are computed to within unit round-off
in double precision arithmetic.
As the error analysis and algorithm design piggybacks on the development of the conventional algorithm for computing the matrix exponential,
it is expected that the algorithm for the Cholesky factor of the Gramian will have similar numerical performance in practice.
This has indeed been demonstrated through a set of experiments.
The doubling recursion for both the matrix exponential and the Gramian both require one matrix multiplication each,
and the former additionally requires a QR decomposition.
Consequently, the doubling phase is expected to be equally problematic for both quantities.
Nevertheless, in the numerical experiments the algorithm has performed to satisfaction.

\renewcommand{\theHsection}{A\arabic{section}} 
\appendix

\section{Pseudo-code algorithms} \label{sec:app-algorithms}
A pseudo-code summary of the complete algorithm for $q = 3, 5, 7$ and $9$ is given in Algorithms~\ref{alg:init} and~\ref{alg:main}. Algorithm~\ref{alg:init} demonstrates how to compute the initial approximations $\widehat{\Phi}_0(A)$ and $\widehat{U}_0(A,B)$ in an efficient way using the pre-computed coefficients in Appendix~\ref{sec:coefficient_tables}. The algorithm minimizes the number of multiplications of $A$ by using the polynomial evaluation strategy mentioned above. Note that the parts involving $U$ and $V$ are the same as for the matrix exponential~\cite{Higham2005}. For $q=13$, the algorithm can be further optimized by also precomputing $A^4B$ and $A^6B$. This has been done in the Julia package FiniteHorizonGramians.jl\footnote{
\url{https://github.com/filtron/FiniteHorizonGramians.jl}.
}, but is omitted from the pseudo-code for brevity.
Algorithm~\ref{alg:main} is the driver algorithm, which selects the appropriate scaling and performs the doubling. Like the scaling and squaring method for the matrix exponential, it benefits from the availability of an efficient estimate of $\norm{A}$, here denoted $\texttt{normest}(A)$. The current implementation computes the actual norm instead of an estimate, since this was sufficiently fast for the matrices considered in Section~\ref{sec:experiments}.

\captionof{algorithm}{Computing $\widehat{\Phi}_0(A)$ and $\widehat{U}_0(A,B)$. \label{alg:init}}
\begin{algorithmic}
  \Procedure {ExpAndGramInit}{$A$, $B$, $q$}
    \State Note: 1-based indexing
    \State Load coefficients $b_{k}$ from the relevant vector \texttt{pade\_num} in Appendix~\ref{sec:coefficient_tables}
    \State Load coefficients $c_{k,j}$ from the relevant matrix \texttt{leg\_nums} in Appendix~\ref{sec:coefficient_tables}
    \State
    \State $A_2 \gets A^2$
    \State $P \gets A_2$
    \State $V \gets b_3 P + b_1 I$ \Comment Contains even powers of $A$
    \State $U \gets b_4 P + b_2 I$ \Comment Contains odd powers of $A$

    \State $(L^{even}_k, L^{odd}_k) \gets (0_{n \times m}, 0_{n \times m})$ for $k = 1, \ldots, (q+1)/2$ \Comment Initialize with zero matrices
    \State $L^{even}_1 \gets PBc_{1,3} + B c_{1,1}$
    \State $L^{even}_2 \gets PBc_{3,3}$

    \If{$q<4$}
      \State $L^{odd}_1 \gets Bc_{2,2}$
    \Else
      \State $L^{odd}_1 \gets PBc_{2,4} + Bc_{2,2}$
    \EndIf
    \State $L^{odd}_2 \gets  PBc_{4,4}$

    \State

    \For{$k = 2, \ldots, (q-1)/2$}
      \State $P \gets A_2P$
      \State $V \gets b_{2k+1} P V $
      \State $U \gets b_{2k+2} P U $
      \For{$i = 1, \ldots, (q+1)/2$}
        \State $L^{even}_i \gets L^{even}_i + PBc_{2i-1, 2k+1}$
        \State $L^{odd}_i \gets L^{odd}_i + PBc_{2i, 2k+2}$
      \EndFor
    \EndFor
    \State $U \gets A U $

    \State $L^{odd}_k \gets AL^{odd}_k$ for $k = 1, \ldots, (q+1)/2$

    \State
    \State $N_q \gets V + U$
    \State $D_q \gets V - U$

    \State

    \State Solve $D_qL = \begin{bmatrix}  L^{even}_1, & L^{even}_2, & \ldots, & L^{even}_{(q+1)/2}, & L^{odd}_1, & L^{odd}_2, & \ldots, & L^{odd}_{(q+1)/2}  \end{bmatrix}$

    \State $(Q,R) = \texttt{qr}(L)$ \Comment QR-factorize $L$

    \State $\widehat{U}_0(A,B) \gets R$
  \EndProcedure
\end{algorithmic}

\captionof{algorithm}{Computing $\widehat{\Phi}(A)$ and $\widehat{U}(A,B)$. \label{alg:main}}
\begin{algorithmic}
  \Procedure {ExpAndGram}{$A$, $B$}
    \State $\texttt{nrm}  \gets \texttt{normest}(A)$ \Comment Estimate $\norm{A}$
    \For {$q \in \{3, 5, 7, 9\}$}
      \If {$\texttt{nrm} \leq \eta_q$ and $n \le q + 1$ } \Comment No need to rescale
        \State \textbf{return} $\texttt{ExpAndGramInit}(A, B, q)$
      \EndIf
    \EndFor
    \State $q \gets 13$
    \State $s \gets \Big \lceil \log_2 \max\Big( \frac{\texttt{nrm}}{\eta_q},  \frac{n-1}{q} \Big) \Big\rceil$

    \State $A_s \gets A / 2^s$
    \State $B_s \gets B / \sqrt{2^s}$

    \State $(\Phi, U) \gets \texttt{ExpAndGramInit}(A, B, q)$

    \For{$j=1,\ldots, s$} \Comment Doubling phase
      \State $\Phi \gets \Phi^2$
      \State $\tilde{U} \gets \begin{bmatrix} U \Phi & U \end{bmatrix}^T$
      \State $(Q,R) = \texttt{qr}(\tilde{U})$ \Comment QR-factorize $\tilde{U}$
      \State $U \gets R$
    \EndFor

  \EndProcedure
\end{algorithmic}

\section{Additional information on experiments}\label{sec:app-experiments}
As pointed out in the main text, some matrices from MatrixDepot.jl were excluded from experiment 1.
It was the following matrices:
\begin{verbatim}
  [
    "blur",
    "hadamard",
    "phillips",
    "rosser",
    "neumann",
    "parallax",
    "poisson",
    "wathen",
    "invhilb",
    "vand",
    "golub",
    "magic",
    "pascal",
]
\end{verbatim}
The latter five were problematic in the sense of having eigenvalues with positive real part and very large norms.
The former matrices were excluded on the grounds of not conforming with the general API and were usually sparse matrices.

\section{Coefficient tables for the Legendre expansion of the Matrix exponential}\label{sec:coefficient_tables}
In this section, the necessary quantities to implement the initial approximation of the
matrix exponential and the Gramian are listed for $q = 3, 5, 7, 9, 13$.
Recall that the initial approximation of the matrix exponential is given by the diagonal Pad\'e approximant
\begin{equation*}
r_q(z) = \frac{N_q(z)}{D_q(z)} = \frac{\tilde{N}_q(z)}{\tilde{D}_q(z)},
\end{equation*}
where $N_q$ and $D_q$ are the Pad\'e numerator and denominator, respectively.
The numerator $\tilde{N}_q$ and denominator $\tilde{D}_q$ are scaled versions so that all coefficients are integers.
The coefficients of $\tilde{N}_q$ are listed as \texttt{pade\_num}.
Furthermore, the coefficients $C_k(z)$ are given by
\begin{equation*}
C_k(z) = \frac{\tilde{L}_k(z)}{\tilde{D}_q(z)},
\end{equation*}
where $\tilde{L}_k$ are polynomials whose coefficients are listed as the rows of the matrix referred to as \texttt{leg\_nums}. They are rescaled versions of $L_k$, such that $\frac{\tilde{L}_k(z)}{\tilde{D}_q(z)} = \frac{L_k(z)}{D_q(z)}$.
Lastly, the square norms of the Legendre polynomials are listed as \texttt{sqr\_norms}, that is $1, 3, \ldots, 2k+1, \ldots, 2q+1$.

\subsection{Coefficient tables for $q = 3$}
\begin{scriptsize}
\begin{verbatim}
pade_num = [120, 60, 12, 1]
leg_nums = [120 0 2 0; 0 60 0 0; 0 0 10 0; 0 0 0 1]
sqr_norms = [1, 3, 5, 7]
\end{verbatim}
\end{scriptsize}
\subsection{Coefficient tables for $q = 5$}
\begin{scriptsize}
\begin{verbatim}
pade_num = [30240, 15120, 3360, 420, 30, 1]
leg_nums =
[
30240 0 840 0 2 0
0 15120 0 168 0 0
0 0 2520 0 10 0
0 0 0 252 0 0
0 0 0 0 18 0
0 0 0 0 0 1
]
sqr_norms = [1, 3, 5, 7, 9, 11]
\end{verbatim}
\end{scriptsize}
\subsection{Coefficient tables for $q = 7$}
\begin{scriptsize}
\begin{verbatim}
pade_num = [17297280, 8648640, 1995840, 277200, 25200, 1512, 56, 1]
leg_nums =
[
17297280 0 554400 0 3024 0 2 0
0 8648640 0 133056 0 324 0 0
0 0 1441440 0 11880 0 10 0
0 0 0 144144 0 616 0 0
0 0 0 0 10296 0 18 0
0 0 0 0 0 572 0 0
0 0 0 0 0 0 26 0
0 0 0 0 0 0 0 1
]
sqr_norms = [1, 3, 5, 7, 9, 11, 13, 15]
\end{verbatim}
\end{scriptsize}
\subsection{Coefficient tables for $q = 9$}
\begin{scriptsize}
\begin{verbatim}
pade_num =
[
17643225600,
8821612800,
2075673600,
302702400,
30270240,
2162160,
110880,
3960,
90,
1,
]
leg_nums =
[
17643225600 0 605404800 0 4324320 0 7920 0 2 0
0 8821612800 0 155675520 0 617760 0 528 0 0
0 0 1470268800 0 15444000 0 34320 0 10 0
0 0 0 147026880 0 960960 0 1092 0 0
0 0 0 0 10501920 0 42120 0 18 0
0 0 0 0 0 583440 0 1320 0 0
0 0 0 0 0 0 26520 0 26 0
0 0 0 0 0 0 0 1020 0 0
0 0 0 0 0 0 0 0 34 0
0 0 0 0 0 0 0 0 0 1
]
sqr_norms = [1, 3, 5, 7, 9, 11, 13, 15, 17, 19]
\end{verbatim}
\end{scriptsize}

\subsection{Coefficient tables for $q = 13$}
\begin{scriptsize}
\begin{verbatim}
pade_num =
[
64764752532480000,
32382376266240000,
7771770303897600,
1187353796428800,
129060195264000,
10559470521600,
670442572800,
33522128640,
1323241920,
40840800,
960960,
16380,
182,
1,
]
leg_nums =
[
64764752532480000 0 2374707592857600 0 21118941043200 0 67044257280 0 81681600 0 32760 0 2 0
0 32382376266240000 0 647647525324800 0 3620389893120 0 7449361920 0 5569200 0 1080 0 0
0 0 5397062711040000 0 69390806284800 0 260727667200 0 352716000 0 153000 0 10 0
0 0 0 539706271104000 0 4797389076480 0 12443820480 0 10852800 0 2380 0 0
0 0 0 0 38550447936000 0 245321032320 0 439538400 0 232560 0 18 0
0 0 0 0 0 2141691552000 0 9884730240 0 11938080 0 3344 0 0
0 0 0 0 0 0 97349616000 0 324498720 0 248976 0 26 0
0 0 0 0 0 0 0 3744216000 0 8809920 0 3780 0 0
0 0 0 0 0 0 0 0 124807200 0 197064 0 34 0
0 0 0 0 0 0 0 0 0 3670800 0 3496 0 0
0 0 0 0 0 0 0 0 0 0 96600 0 42 0
0 0 0 0 0 0 0 0 0 0 0 2300 0 0
0 0 0 0 0 0 0 0 0 0 0 0 50 0
0 0 0 0 0 0 0 0 0 0 0 0 0 1
]
sqr_norms = [1, 3, 5, 7, 9, 11, 13, 15, 17, 19, 21, 23, 25, 27]
\end{verbatim}
\end{scriptsize}

\bibliographystyle{siamplain}
\bibliography{refs}

\end{document}